\providecommand{\email}[1]{\href{mailto:#1}{\nolinkurl{#1}}}
\setlist[enumerate,1]{label={(\roman*)}}
\setlist[enumerate,2]{label={(\alph*)}}
\setlist[enumerate,3]{label={(\Roman*)}}
\newcommand{\newsstheorem}[2]{
  \newaliascnt{#1}{dummy}
  \newtheorem{#1}[#1]{#2}
  \aliascntresetthe{#1}
  \expandafter\def\csname #1autorefname\endcsname{#2}
}
\numberwithin{dummy}{section}
\theoremstyle{plain}
\theoremstyle{definition}
\theoremstyle{remark}
\newenvironment{eqnarr*}{\begin{IEEEeqnarray*}{rCl}}{\end{IEEEeqnarray*}\ignorespacesafterend}
\newcommand\RR{\mathbb{R}}
\newcommand\PP{\mathbb{P}}
\newcommand\QQ{\mathbb{Q}}
\newcommand\EE{\mathbb{E}}
\newcommand\Indic[1]{\Ind_{\{#1\}}}
\newcommand\e{{\rm e}}
\newcommand\Ind{\mathbbm{1}}
\newcommand\from{\colon}
\newcommand\mathof[1]{{\operator@font#1}} \makeatother
\newcommand\dd{\mathof{d}}
\DeclarePairedDelimiter\abs{\lvert}{\rvert}
\DeclarePairedDelimiterX\ip[2]{\langle}{\rangle}{#1,#2}
\begin{document}

\title{A probabilistic approach\\ to spectral analysis of\\ growth-fragmentation equations}
\author{Jean Bertoin\footnote{Insitute of Mathematics, University of Zurich, Switzerland}  \and Alexander R. Watson\footnote{School of Mathematics, University of Manchester, UK}}
\date{\unskip}
\maketitle 
\thispagestyle{empty}

\begin{abstract}
  \noindent
  The growth-fragmentation equation describes a system
  of growing and dividing particles,
  and arises in models of cell division, protein
  polymerisation and even telecommunications protocols.
  Several important questions about the equation concern the
  asymptotic behaviour of solutions at large times: at what rate do they
  converge to zero or infinity, and what does the asymptotic
  profile of the solutions look like? Does the rescaled
  solution converge to its asymptotic profile at
  an exponential speed? These questions have traditionally
  been studied using analytic techniques   such as entropy
  methods or splitting of operators. In this work, we present
  a probabilistic approach
  to the study of this asymptotic behaviour.
  We use a Feynman--Kac formula to relate the solution
  of the growth-fragmentation equation to the semigroup
  of a Markov process, and characterise the rate of decay or growth
  in terms of this process. We then identify the spectral radius and the
  asymptotic profile in terms of a related Markov process,
  and give a spectral interpretation in
  terms of the growth-fragmentation operator and its dual.
  In special cases, we
  obtain exponential convergence.
\end{abstract} 
{\small 
\textit{Keywords:}
growth-fragmentation equation,
transport equations,
cell division equation,
one-parameter semigroups,
spectral analysis,
spectral radius,
Feynman--Kac formula,
piecewise-deterministic Markov processes,
Lévy processes.\newline
\textit{2010 Mathematics Subject Classification:}
35Q92, 
47D06, 
45K05, 
47G20, 
60G51. 
}

\section{Introduction}
\label{s:intro}

This work studies the asymptotic behaviour of
solutions to the growth-fragmentation equation using
probabilistic methods.
The growth-fragmentation arises from mathematical models of 
biological phenomena such as cell division \cite[\S 4]{Per07} and 
protein polymerization \cite{GPW06},
as well as in telecommunications \cite{CMP10}.
The equation
describes the evolution of the density $u_t(x)$ of
particles of mass $x>0$  at time $t\geq 0$, in a system whose
dynamics are given as follows.
Each particle grows at a certain rate depending
on its mass and experiences `dislocation events',
again at a rate depending on its mass. 
At each such event, it
splits into smaller particles in such a way that the total mass
is conserved.
The growth-fragmentation equation is a partial integro-differential
equation and can be expressed in the form
\begin{equation}\label{e:gfe}
  \partial_t u_t(x) +  \partial_x(c(x)u_t(x))
  = \int_x^{\infty} u_t(y) k(y,x) \dd y - K(x)u_t(x),
\end{equation}
where $c \from (0,\infty)\to (0,\infty)$ is a continuous positive 
function specifying the growth rate, 
$k\from (0,\infty)\times (0,\infty)\to \RR_+$ is a so-called fragmentation 
kernel, and the initial condition $u_0$ is prescribed.
In words,  $k(y,x)$ represents  the rate at which a particle 
with size $x$ appears as the result of the dislocation of a particle
with mass $y>x$.
More precisely,  the fragmentation kernel
 fulfills 
$$k(x,y)=0 \text{ for $y>x$, and } \int_0^x yk(x,y)\dd y=xK(x).$$
The first requirement stipulates that after the
dislocation of a particle, only particles with smaller masses
can arise. The second reflects the conservation of mass
at dislocation events,
and gives the interpretation of
$K(x)$ as the total rate of dislocation of particles
with size $x$.

This equation has been studied extensively over many years.
A good introduction to growth-fragmentation equations
and related equations in biology can be found in
the monographs of \citet{Per07}
and \citet{EN00}, and a major issue concerns the
asymptotic behaviour of solutions $u_t$. Typically, one wishes to find
a constant $\rho\in\RR$, the \emph{spectral radius},
for which $\e^{-\rho t}u_t$ converges, in some suitable space,
to a so-called \emph{asymptotic profile} $v$. Ideally, we would also
like to have some information about the \emph{rate of convergence};
that is, we would like to find some $r>0$ with the property
that $\e^{-r t}(\e^{-\rho t}u_t - v)$ converges to zero.

For such questions, a key step in finding $\rho$ is the
spectral analysis of the growth-fragmentation operator 
\begin{equation}\label{e:A}
  \mathcal{A}f(x) = c(x)f'(x) + \int_0^x f(y) k(x,y) \dd y - K(x) f(x), \qquad x>0,  
\end{equation}
which is defined for smooth compactly supported $f$, say.

Indeed, observe first that the weak form of
the growth-fragmentation equation \eqref{e:gfe}
is given by 
\begin{equation}\label{e:mu}
  \frac{\mathrm {d}}{\mathrm{d} t} \ip{u_t}{f} = \ip{u_t}{\mathcal{A}f},
\end{equation}
where we use the notation $\ip{\mu}{g} \coloneqq \int g(x) \, \mu(\dd x)$
for any measure $\mu$ and function $g$ on the same space, and $\ip{f}{g}\coloneqq \ip{\mu}{g}$ with $\mu(\dd x) = f(x)\dd x$
when $f\geq 0$ is a measurable function. 
Under some simple assumptions that we will specify shortly,
there exists a unique semigroup $(T_t)_{t\geq 0}$,
defined on a certain Banach space of functions on $(0,\infty)$,
whose infinitesimal 
generator extends $\mathcal{A}$. Then, the solutions $u_t$
of \eqref{e:mu} have the representation
$$\ip{u_t}{f}=\ip{u_0}{T_tf}.$$
Several authors  have shown  the existence of a positive eigenfunction 
associated to the first eigenvalue of the dual operator $\mathcal{A}^*$ 
and established exponential convergence of the solution to an asymptotic profile,
under certain assumptions on $c$ and $K$.
Since the literature is considerable, we refer only to a few works which are quite close
to our assumptions or approach: \citet{CCM11} study the case of linear growth
and $K$ bounded by a power function,
via entropy methods; \citet{MS16} use a splitting technique in order to derive
a Krein--Rutman theorem, which is effective when $c$ is constant and $K$
is zero in some neighbourhood of $0$; and \citet{BPR12} study a situation
in which particle sizes are bounded, and do so via an interesting connection with 
stochastic semigroups. 
Moreover, \citet{CDG} investigate the dependence of the leading eigenvalue (i.e. 
 the spectral radius), 
and the corresponding eigenvector on the coefficients of the equation; 
and \citet{Bou} studies a conservative
version of the equation using a Markov process approach similar to ours.

The purpose of this work is to show the usefulness of stochastic
methods in this setting.
We have not attempted to find the most general conditions, but rather
to demonstrate the benefits of the probabilistic approach.
For the sake of simplicity and conciseness, we shall restrict our attention 
to the case when the growth rate is bounded from above by a linear function, namely
\begin{equation}\label{e:c-bound}
  \| \underline c\|_{\infty}\coloneqq \sup_{x>0} c(x)/x <\infty,
\end{equation}
and we shall shortly make some further technical assumptions on the 
fragmentation kernel $k$. We stress that the techniques developed in this work can be adapted to
deal with other types of growth and fragmentation rates of interest which have been considered in preceding works. 

In short, we will obtain probabilistic representations of the main quantities
of interest (the semigroup $T_t$, the spectral radius $\rho$, the asymptotic profile $v$, and so on)
in terms of a certain Markov process  with values in $(0,\infty)$. 
Specifically, even though $(T_t)_{t\geq 0}$ is not a Markovian 
(i.e., contraction) semigroup, the operator
$${\mathcal G}f(x) \coloneqq c(x)f'(x)+ \int_0^x(f(y)-f(x)) \frac{y}{x} k(x,y) \, \dd y$$
\emph{is} the infinitesimal generator of a Markovian semigroup, and this
operator is closely connected to $\mathcal{A}$. 

To be precise, comparing ${\mathcal A}$ and ${\mathcal G}$ allows us 
to express the semigroup $T_t$ via a so-called Feynman--Kac formula:
\begin{equation}\label{e:fk}
  T_tf(x) = x \EE_x\left( \mathcal{E}_t \frac{ f(X_t)}{X_t}\right),
  \qquad t\ge 0, \quad x>0,
\end{equation}
where
$X$ is the Markov process with infinitesimal generator $\mathcal{G}$,
$\PP_x$ and $\EE_x$ represent respectively the probability measure and
expectation under which $X$ starts at $X_0 = x$, and
$$
  \mathcal{E}_t\coloneqq \exp\left(\int_0^t \frac{c(X_s)}{X_s}  \dd s \right),
  \qquad t\ge 0.
$$
Even though the formula \eqref{e:fk} is not very explicit in general, 
we can use it to say quite a lot about the behaviour of  $T_t$ as $t\to \infty$.

In this direction, a fundamental role is played by the function $L_{x,y}: 
\RR \to (0,\infty]$ defined as the 
Laplace transform 
\begin{equation}\label{e:lbar}
 L_{x,y}(q) \coloneqq \EE_{x}\left( \e^{-q H(y)} \mathcal{E}_{H(y)}, H(y)<\infty\right),
\end{equation}
where $H(y)$ denotes the first hitting time of  $y$, 
Indeed, we identify  a first quantity of importance in the study of the large time
behaviour of $(T_t)_{t\geq 0}$, namely the \emph{spectral radius},
as
\begin{equation}\label{e:lambda}
  \rho   \coloneqq   \inf   \left\{   q\in\RR:   L_{x,x}(q) < 1   \right \},
\end{equation}
where  $x>0$ is arbitrary.
The quantity $\rho$ is sometimes called the `Malthus exponent' in
the literature on growth-fragmentation.

Next, we shall focus on the case where 
\begin{equation} \label{e:Hlambda}
  L_{x,x}(\rho) = 1
\end{equation}
for some (and then all) $x>0$, and, for arbitrary fixed $x_0>0$, set 
$$\ell(x) = L_{x,x_0}(\rho).$$
Then, 
the function
$$x\mapsto \bar \ell(x) \coloneqq  x\ell(x)$$
can be viewed as an eigenfunction of  ${\mathcal A}$ with eigenvalue
$\rho$, whenever the function $\ell$ is bounded. 
Furthermore, provided that the function $q \mapsto L_{x,x}(q)$ possesses a finite
right-derivative at $\rho$ for some (and then all) $x>0$,  
the absolutely continuous measure
\begin{equation} \label{e:statm}
\nu(\dd x)\coloneqq  \frac{\dd x}{\bar \ell(x) c(x) |L'_{x,x}(\rho)|} , \qquad x>0, 
 \end{equation}
is an eigenmeasure of the dual operator ${\mathcal A}^*$, with eigenvalue $\rho$
(at least under some further technical conditions).

Finally, one can describe the asymptotic behaviour of the fragmentation semigroup 
as follows. For every $x>0$ and continuous function $f:(0,\infty)\to \RR$
with compact support, one has
\begin{equation}\label{e:nu}
\lim_{t\to \infty} \e^{-\rho t} T_tf(x)= \bar \ell(x)  \ip{\nu}{f}.\end{equation}
In certain concrete situations, we can furthermore demonstrate
exponential convergence, using classical probabilistic techniques.

Technically, the cornerstone of our analysis is that the assumption \eqref{e:Hlambda}
 enables us to define a remarkable martingale multiplicative functional ${\mathcal M}$ 
 of $X$. In turn, by classical  change of probabilities, ${\mathcal M}$ yields 
 another Markov process $(Y_t)_{t\geq 0}$ that is always {\it recurrent}. 
 Using ergodic theory for recurrent  Markov processes then readily leads
 to the large time asymptotic behaviour of the growth-fragmentation semigroup
 mentioned above.

The formulas above may seem somewhat cryptic, 
but could nonetheless be  useful in applications, for instance
as the basis of a Monte Carlo method for computing the spectral
radius and its corresponding eigenfunction and dual eigenmeasure.
There are well-established algorithms for efficiently 
simulating Markov processes,
and the process $X$ which appears here falls within the even
nicer class of `piecewise deterministic' Markov processes.
This simulation is probably less costly than numerical estimation of
the leading eigenvalue and corresponding eigenfunctions of
${\mathcal A}$ and its dual, at least when the spectral gap
is small or absent.

The remainder of this article is organised as follows.
In \autoref{s:fk}, we make precise the relationship
between the operators $\mathcal{A}$ and $\mathcal{G}$,
and derive the Feynman--Kac formula \eqref{e:fk}. Along
the way, this establishes the existence and uniqueness
of solutions to \eqref{e:mu}. In \autoref{s:malthus},
we identify the spectral radius
$\rho$ and give some simple bounds for this quantity.
Under the assumption \eqref{e:Hlambda}, we
give in \autoref{s:mult} a martingale $\mathcal{M}$ for
the process $X$, and apply it in order to show that
the function $\bar\ell$ is an eigenfunction of $\mathcal{A}$
with eigenvalue $\rho$.
We then use the martingale $\mathcal{M}$,
in \autoref{s:ergodic},
to transform $X$  into
another Markov process $Y$, by a classical change of measure.
The key point is that the process $Y$ is always
recurrent, and this leads to our main result,
\autoref{Th2}, which comes from the ergodic theory
of positive recurrent Markov processes. In this section,
we also show that $\nu$ is an eigenmeasure of $\mathcal{A}^*$.
Finally, in \autoref{s:levy}, we specialise our results to the
case where the growth rate is linear, that is $c(x)=ax$,
and give more explicit results, including
criteria for exponential convergence to the asymptotic profile.
We also study in some detail a special case where
the strongest form of convergence does not hold.

\section{Feynman-Kac representation of the semigroup}
\label{s:fk}

Our main task in this section is to derive a representation
of the semigroup $T_t$ solving the growth-fragmentation
equation, using a Feynman--Kac formula. We begin
by introducing some notation and listing the assumptions which
will be required for our results.

We write ${\mathcal C}_b$ for the Banach space  of 
continuous and bounded functions
\linebreak 
$f: (0,\infty)\to \RR$, 
endowed with the supremum norm $\|\cdot \|_{\infty}$. 
It will be further convenient to set 
\linebreak 
$\bar f(x)= xf(x)$ for every $f\in{\mathcal C}_b$ and $x>0$, 
and define $\bar {\mathcal C}_b=\{\bar f: f\in {\mathcal C}_b\}$. 
Analogously, we set $\underline f(x)= x^{-1}f(x)$.

Recall our assumption \eqref{e:c-bound} that the growth rate
$c$ is continuous and is bounded from above by a linear function,
that is, in our notation, $\underline c\in {\mathcal C}_b$. 
We further set 
$$\bar k(x,y)\coloneqq  \frac{y}{x} k(x,y),$$
and assume that 
\begin{equation}\label{H8}
  x\mapsto \bar k(x,\cdot)
  \text{ is a continuous bounded map from $(0,\infty)$ to $L^1(\dd y)$.}
\end{equation} 

Recall furthermore that the operator ${\mathcal A}$ is
defined by \eqref{e:A}; in fact, it will be more
convenient for us to consider 
$$\bar {\mathcal{A}}f (x)= \frac{1}{x} \mathcal{A}\bar f(x),$$
which can be written as
\begin{equation}\label{e:bara}
  \bar {\mathcal{A}}f(x)
  =
  c(x) f'(x) + \int_0^x ( f(y) - f(x) )\, \bar k(x,y) \dd y + \underline c(x)  f(x).
\end{equation}
We view  $\bar {\mathcal A}$ as an operator on ${\mathcal C}_b$
whose domain ${\mathcal D}(\bar {\mathcal A})$ contains the space 
of bounded continuously differentiable  functions $f$  such that 
$c f'$ bounded. Equivalently, $ {\mathcal A}$ is seen as an 
operator on $\bar {\mathcal C}_b$ with domain
${\mathcal D}( {\mathcal A})=\{\bar f: f\in {\mathcal D}(\bar {\mathcal A})\}$. 
The following lemma, ensuring the existence and uniqueness 
of semigroups $\bar{T}_t$ and $T_t$
with infinitesimal generators 
$\bar {\mathcal A}$ and $ {\mathcal A}$ respectively,
relies on standard arguments. 

\begin{lemma}\label{L0}
  Under the assumptions above, we have:
  \begin{enumerate}
    \item There exists a unique positive strongly continuous semigroup 
    $(\bar T_t)_{t\geq 0}$ on ${\mathcal C}_b$ whose infinitesimal 
    generator coincides with
    $\bar {\mathcal A}$ on the space of bounded continuously 
    differentiable  functions $f$  with  $c f'$ bounded.
 
    \item As a consequence, the identity
    $$ T_t \bar f(x) = x \bar T_t f(x), \quad f\in  {\mathcal C}_b \text{ and } x>0$$
    defines the unique positive strongly continuous semigroup $(T_t)_{t\geq 0}$
    on $\bar {\mathcal C}_b$ with infinitesimal generator 
    ${\mathcal A}$. 
  \end{enumerate}
\end{lemma}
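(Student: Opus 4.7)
The plan is to recognise $\bar{\mathcal A}=\mathcal{G}+M$, where $\mathcal{G}f(x)=c(x)f'(x)+\int_0^x(f(y)-f(x))\bar k(x,y)\,\dd y$ is the Markovian operator already introduced and $M$ is multiplication by $\underline c$, a bounded operator on $\mathcal{C}_b$ by \eqref{e:c-bound}. I would then construct $\bar T_t$ as a Feynman--Kac transform of the transition semigroup of the piecewise-deterministic Markov process $X$ associated with $\mathcal{G}$, and transfer the result to $\bar{\mathcal{C}}_b$ by the obvious isomorphism.

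The first step is to build $X$ via Davis's standard PDMP recipe. Between jumps, $X$ follows the flow of $\dot\phi=c(\phi)$, which is globally defined on $(0,\infty)$ since $c$ is continuous and bounded above by the linear function $\|\underline c\|_\infty x$. Jumps occur at rate $K(x)=\int_0^x\bar k(x,y)\,\dd y$, uniformly bounded by assumption \eqref{H8}, with post-jump law $\bar k(x,\cdot)/K(x)$. Uniform boundedness of $K$ rules out explosion, and the transition operators $P_tf(x)=\EE_x[f(X_t)]$ then form a positive contraction semigroup on $\mathcal{C}_b$ that is strongly continuous (by continuity of the flow together with continuity of $\bar k(x,\cdot)$ in $L^1$) and whose generator extends $\mathcal{G}$ on the stated domain, as follows from Dynkin's formula.

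Next, I would set
\begin{equation*}
\bar T_tf(x)\coloneqq\EE_x\!\left[\mathcal{E}_t f(X_t)\right],\qquad
\mathcal{E}_t=\exp\!\left(\int_0^t\underline c(X_s)\,\dd s\right).
\end{equation*}
Positivity is immediate, the semigroup identity $\bar T_{t+s}=\bar T_t\bar T_s$ follows from the multiplicativity of $\mathcal{E}$ combined with the Markov property, and $\|\bar T_tf\|_\infty\le\e^{\|\underline c\|_\infty t}\|f\|_\infty$ is clear. An integration by parts on $\mathcal{E}_tf(X_t)$, using $\dd\mathcal{E}_t=\mathcal{E}_t\underline c(X_t)\,\dd t$ and the martingale problem for $\mathcal{G}$, yields the Dynkin-type identity $\bar T_tf-f=\int_0^t\bar T_s(\bar{\mathcal A}f)\,\dd s$ for $f$ in the stated domain, which identifies the generator of $\bar T_t$ as an extension of $\bar{\mathcal A}$ there. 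Uniqueness I would deduce from Phillips's bounded perturbation theorem: once the $C_0$-semigroup $P_t$ is determined by $\mathcal{G}$, the perturbation by the bounded operator $M$ generates a unique $C_0$-semigroup given by the Dyson--Phillips series, and this series is precisely the Feynman--Kac formula above.

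Part (ii) is then a transfer of structure: the map $f\mapsto\bar f$ is a linear bijection of $\mathcal{C}_b$ onto $\bar{\mathcal{C}}_b$, and endowing $\bar{\mathcal{C}}_b$ with the transported norm $\|\bar f\|=\|f\|_\infty$ makes it isometrically isomorphic to $\mathcal{C}_b$; the relation $\mathcal{A}\bar f=x\bar{\mathcal A}f$ forces $T_t\bar f\coloneqq\overline{\bar T_tf}$ to be the unique positive strongly continuous semigroup on $\bar{\mathcal{C}}_b$ with generator $\mathcal{A}$ on $\mathcal{D}(\mathcal{A})$. I expect the main obstacle to lie in strong continuity of $P_t$ on $(\mathcal{C}_b,\|\cdot\|_\infty)$: since the flow can carry $x$ to $xe^{\|\underline c\|_\infty t}$, uniform-in-$x$ continuity of $f\circ\phi_t$ is not automatic for arbitrary $f\in\mathcal{C}_b$, so some care is needed to exploit \eqref{H8} and the local uniform continuity of $c$; closely related is the need to verify that the singled-out subspace of $\mathcal{D}(\bar{\mathcal A})$ is a core, without which ``coincides with'' would not suffice for the asserted uniqueness.
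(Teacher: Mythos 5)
Your decomposition $\bar{\mathcal{A}} = \mathcal{G} + M$ and Feynman--Kac construction is a genuinely different route from the paper, which instead subtracts the \emph{constant} $\|\underline{c}\|_\infty$ to obtain $\tilde{\mathcal{A}}f = \bar{\mathcal{A}}f - \|\underline{c}\|_\infty f$, reads this as the generator of a \emph{killed} Markov process (killing rate $\|\underline{c}\|_\infty - \underline{c}(x) \ge 0$), and sets $\bar T_t = \e^{t\|\underline{c}\|_\infty}\tilde T_t$. Your forward construction is sound and is, in effect, a proof of the later Lemma~\ref{Le1} folded into Lemma~\ref{L0}; what the paper's approach buys is a very clean uniqueness argument, since a scalar exponential rescaling is a bijection between positive $C_0$-semigroups with generator extending $\bar{\mathcal{A}}$ on the subdomain and those with generator extending $\tilde{\mathcal{A}}$, and the latter are positive \emph{contraction} semigroups, hence transition semigroups of a (sub-)Markov process determined by the well-posed martingale problem.

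The gap is in your uniqueness step. Phillips's bounded perturbation theorem tells you that if $G$ is the \emph{full} generator of a $C_0$-semigroup and $M$ is bounded, then $G+M$ generates a unique $C_0$-semigroup. But the claim in the lemma is about uniqueness among semigroups whose generator merely \emph{extends} $\bar{\mathcal{A}}$ on the stated $C^1$ subdomain $D$. If $\bar T_t^{(1)}$ and $\bar T_t^{(2)}$ are two such semigroups with full generators $G^{(1)}, G^{(2)}$, then $G^{(i)} - M$ both extend $\mathcal{G}$ on $D$, but you have no a priori control ensuring that the semigroups generated by $G^{(i)} - M$ are positive contraction (Markov transition) semigroups, so the well-posedness of the martingale problem for $\mathcal{G}$ cannot be invoked directly to identify them. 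You raise exactly this concern yourself (``whether the singled-out subspace of $\mathcal{D}(\bar{\mathcal{A}})$ is a core''), but the proposal leaves it unresolved. The paper sidesteps it: the a priori bound $\|\bar T_t f\|_\infty \le \e^{t\|\underline{c}\|_\infty}\|f\|_\infty$ is derived by Gronwall from $\frac{\dd}{\dd t}\bar T_t \mathbf{1} = \bar T_t\bar{\mathcal{A}}\mathbf{1} \le \|\underline{c}\|_\infty \bar T_t\mathbf{1}$ (using positivity of $\bar T_t$), whence $\tilde T_t = \e^{-t\|\underline{c}\|_\infty}\bar T_t$ is a positive contraction, and \emph{then} the martingale problem for $\tilde{\mathcal{A}}$ applies to the sub-Markov process so obtained. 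You cannot prove that estimate for an arbitrary candidate $\bar T_t$ by appealing to the Feynman--Kac formula, since the formula is exactly the conclusion you are trying to establish uniquely. If you want to keep the operator decomposition $\bar{\mathcal{A}} = \mathcal{G} + M$ for existence, you should still run the Gronwall + constant-shift argument for uniqueness.

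Your concern about strong continuity of $P_t$ on $(\mathcal{C}_b, \|\cdot\|_\infty)$ is a legitimate technical point that the paper also passes over lightly; it does not distinguish the two approaches.
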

\begin{proof} 
  Recall that $\underline c\in{\mathcal C}_b$ and consider
  first the operator
  $\tilde {\mathcal{A}}f\coloneqq  \bar {\mathcal{A}}f - \|\underline c\|_{\infty}f$,
  that is,
  $$
    \tilde {\mathcal{A}}f(x) 
    =
    c(x) f'(x) + \int_0^x ( f(y) - f(x) )\, \bar k(x,y) \dd y 
    - (\|\underline c\|_{\infty} -\underline c(x))  f(x),
  $$ 
  which is defined for $f$ bounded and continuously differentiable with $c f'$ bounded. 
  Plainly $\|\underline c\|_{\infty} -\underline c\geq 0$, and
  we may view $\tilde {\mathcal{A}}$ as the infinitesimal generator 
  of a  (sub-stochastic, i.e., killed) Markov process $\tilde X$ on $(0,\infty)$. 
  More precisely, it follows from our assumptions 
  (in particular, recall that by \eqref{H8}, the jump kernel $\bar k$ is bounded)
  that the martingale problem for 
  $\tilde {\mathcal{A}}$ is well-posed;
  this can be shown quite simply using \cite[Theorem 8.3.3]{EK-mp}, for instance.
  The transition probabilities of $\tilde X$ yield a positive 
  contraction semigroup  on ${\mathcal C}_b$, say $(\tilde T_t)_{t\geq 0}$,
  that   has  infinitesimal generator $\tilde {\mathcal{A}}$. 
  Then $\bar T_tf\coloneqq \exp(t\|\underline c\|_{\infty}) \tilde T_t f$ 
  defines a positive strongly continuous semigroup on ${\mathcal C}_b$ with
  infinitesimal generator $\bar {\mathcal{A}}$.

  Conversely, if  $(\bar T_t)_{t\geq 0}$ is a positive strongly continuous 
  semigroup on ${\mathcal C}_b$ with infinitesimal generator 
  $\bar {\mathcal A}$, then 
  $$
    \frac{\dd }{\dd t} \bar T_t{\bf 1} 
    = 
    \bar T_t \bar {\mathcal A}{\bf 1} \leq \|\underline c\|_{\infty} \bar T_t{\bf 1},
  $$
  where $\mathbf{1}$ is the constant function with value $1$.
  It follows that 
  $\|\bar T_t f\|_{\infty} \leq \exp(t\|\underline c\|_{\infty})\| f \|_{\infty}$ 
  for all $t\geq 0$ and $f\in{\mathcal C}_b$, and
  $\tilde T_t\coloneqq \exp(t\|\underline c\|_{\infty})\bar T_t$
  defines a positive strongly continuous semigroup on ${\mathcal C}_b$ with
  infinitesimal generator $\tilde {\mathcal{A}}$. 
  The well-posedness of the martingale problem for $\tilde {\mathcal{A}}$ 
  ensures the uniqueness of $(\tilde T_t)_{t\geq 0}$, and thus of $(\bar T_t)_{t\geq 0}$.

  The second assertion follows from a well-known and easy to check formula 
  for multiplicative transformation of semigroups. 
\end{proof}

Although neither $(T_t)_{t\geq 0}$ or $(\bar T_t)_{t\geq 0}$ 
is a contraction semigroup, they both bear a simple relation 
to a certain Markov process with state space $(0,\infty)$, which we now introduce. 
The operator
\begin{equation}\label{e:G}
  {\mathcal G}f(x)
  \coloneqq 
  \bar {\mathcal{A}}f(x) 
  - \underline c(x)f(x)
  =
  c(x)f'(x)+\int_0^x(f(y)-f(x)) \bar k(x,y) \, \dd y,
\end{equation}
with domain $\mathcal{D}(\mathcal{G}) = {\mathcal D}(\bar {\mathcal A})$
is indeed the infinitesimal generator of a conservative (unkilled)
Markov process
$X=(X_t)_{t\geq 0}$, 
and in fact, it is easy to check, again using
\cite[Theorem 8.3.3]{EK-mp}, that the martingale problem
$$
  f(X_t)-\int_0^t {\mathcal G}f(X_s) \dd s
  \quad \text{ is a martingale for every ${\mathcal C}^{1}$ function $f$ with compact support}
$$
is well-posed.
In particular,  the law of $X$ is characterized by ${\mathcal G}$.
We write $\PP_x$ for the law of $X$ started from $x>0$, 
and $\EE_x$ for the corresponding mathematical expectation.

The process $X$ belongs to the class of \emph{piecewise deterministic Markov processes}
introduced by \citet{Dav-pdmp}, meaning that any path $t\mapsto X_t$
follows the
deterministic flow $\dd x(t)= c(x(t))\dd t$, up to a random time at which
it makes its first (random) jump. 
Note further that,  since
$$
  \int_0^{1} \frac{\dd x}{c(x)} = \int_1^{\infty} \frac{\dd x}{c(x)} =  \infty,
$$
$X$  can neither enter from $0$ nor reach $0$ or $\infty$ in finite time.
Finally, it is readily checked that $X$ has the Feller property, in the
sense that its transition probabilities depend continuously on the starting point. 
For the sake of simplicity, we also assume that $X$ is irreducible; this
means that, for every starting point $x>0$, the probability that the Markov process
started from $x$ hits a given target point $y>0$ is strictly positive.
Because $X$ is piecewise deterministic and has only downwards jumps, 
this can be ensured by a simple non-degeneracy assumption on the
fragmentation kernel $k$.

Lemma \ref{L0}(ii) and equation \eqref{e:G} prompt us to consider the exponential functional
$$
  {\mathcal E}_t
  \coloneqq  
  \exp\left(\int_0^t \underline  c(X_s)  \dd s \right),\qquad t\geq 0.
$$
We note the uniform bound ${\mathcal E}_t\leq \exp(t\|\underline c\|_{\infty})$,
and also observe, from the decomposition of the trajectory of $X$ at 
its jump times, that there is the identity
$$
  {\mathcal E}_t=\frac{X_t}{X_0} \prod_{0<s\leq t} \frac{X_{s-}}{X_s}.
$$

The point in introducing the elementary transformation and notation above is 
that it yields a Feynman-Kac representation of the growth-fragmentation semigroup,
which appeared as equation \eqref{e:fk} in the introduction:

\begin{lemma}\label{Le1}  
The growth-fragmentation semigroup $(T_t)_{t\geq 0}$ can be expressed in the  form 
$$
  T_tf(x) = x \EE_x\left( \mathcal{E}_t \underline f(X_t)\right) 
  = x \EE_x\left( \mathcal{E}_t \frac{ f(X_t)}{X_t}\right),
  \qquad f\in \bar{\mathcal C}_b.
$$
\end{lemma}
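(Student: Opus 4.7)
The plan is to verify the Feynman--Kac formula by showing that the right-hand side defines a semigroup whose generator agrees with $\bar{\mathcal A}$ on its domain, and then appeal to the uniqueness statement of \autoref{L0}. Concretely, for $f\in{\mathcal C}_b$ and $x>0$, set
$$S_tf(x) \coloneqq \EE_x\bigl({\mathcal E}_t f(X_t)\bigr).$$
Because ${\mathcal E}_t\leq\exp(t\|\underline c\|_\infty)$, the operator $S_t$ maps ${\mathcal C}_b$ into itself with norm at most $\exp(t\|\underline c\|_\infty)$, it is clearly positive, and the Markov property of $X$ together with the multiplicative identity ${\mathcal E}_{t+s}={\mathcal E}_t\cdot({\mathcal E}_s\circ\theta_t)$ yields the semigroup property $S_{t+s}=S_tS_s$. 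Strong continuity on ${\mathcal C}_b$ then follows from the Feller property of $X$ and dominated convergence, since ${\mathcal E}_t\to 1$ almost surely as $t\to 0$.

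The main step is to identify the generator of $(S_t)_{t\geq 0}$ on $\mathcal{D}(\bar{\mathcal A})$. Fix $f\in\mathcal{D}(\bar{\mathcal A})=\mathcal{D}(\mathcal{G})$. By the martingale problem for $\mathcal{G}$, the process
$$N_t \coloneqq f(X_t) - f(X_0) - \int_0^t \mathcal{G}f(X_s)\,\dd s$$
is a martingale under $\PP_x$. Since $t\mapsto \mathcal{E}_t$ is absolutely continuous with derivative $\mathcal{E}_t\underline c(X_t)$ and has no jumps, the ordinary integration-by-parts formula for semimartingales (with vanishing quadratic covariation) gives
\begin{equation*}
\mathcal{E}_t f(X_t) = f(X_0) + \int_0^t \mathcal{E}_s f(X_s)\underline c(X_s)\,\dd s + \int_0^t \mathcal{E}_s\,\dd f(X_s)
= f(X_0) + \int_0^t \mathcal{E}_s\bar{\mathcal A}f(X_s)\,\dd s + \int_0^t \mathcal{E}_s\,\dd N_s,
\end{equation*}
using that $\bar{\mathcal A}f = \mathcal{G}f + \underline c\cdot f$. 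Because $\mathcal{E}_s$ is uniformly bounded on $[0,t]$ and the integrands are bounded, the stochastic integral against $N$ is a genuine martingale. Taking $\EE_x$ therefore yields
$$S_tf(x) - f(x) = \int_0^t S_s\bar{\mathcal A}f(x)\,\dd s,$$
and differentiating at $t=0$ shows that the infinitesimal generator of $(S_t)$ coincides with $\bar{\mathcal A}$ on $\mathcal{D}(\bar{\mathcal A})$.

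The uniqueness part of \autoref{L0}(i) now forces $S_t=\bar T_t$ on ${\mathcal C}_b$, i.e.\ $\bar T_t f(x) = \EE_x({\mathcal E}_t f(X_t))$. Applying \autoref{L0}(ii) with $g=\bar f\in\bar{\mathcal C}_b$, for which $\underline g(X_t)=f(X_t)=g(X_t)/X_t$, produces exactly the two displayed expressions for $T_tg(x)$.

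The one genuinely delicate point is the integration-by-parts step, because $X$ has jumps; however, the fact that ${\mathcal E}$ is continuous and of finite variation makes the quadratic covariation $[{\mathcal E},f(X)]$ vanish, so no additional jump terms appear and the classical Feynman--Kac argument goes through cleanly. The verification of strong continuity of $(S_t)$ is routine but worth spelling out in order to apply the uniqueness clause of \autoref{L0}.
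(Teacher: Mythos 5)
Your argument is correct and follows essentially the same route as the paper: the Dynkin/martingale-problem identity for $\mathcal{G}$, finite-variation stochastic integration by parts with $\mathcal{E}_t$ (no covariation term, since $\mathcal{E}$ is continuous and of bounded variation), boundedness to upgrade the local martingale to a true martingale, and finally the uniqueness assertion of \autoref{L0}. You are slightly more explicit than the paper in first verifying that $S_tf(x)\coloneqq\EE_x(\mathcal{E}_tf(X_t))$ is a positive strongly continuous semigroup on $\mathcal{C}_b$ before invoking uniqueness, a step the paper leaves tacit when it writes ``Recalling Lemma~\ref{L0}(i), this yields the identity.''
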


\begin{proof}
  Recall  from Dynkin's formula that for every $f\in{\mathcal D}(\bar {\mathcal A})$, 
  $$f(X_t)-\int_0^t {\mathcal G}f(X_s) \dd s\,, \qquad t\geq 0$$
  is a $\PP_x$-martingale for every $x>0$. 
  Since  $(\mathcal{E}_t)_{t\geq 0}$ is a process of bounded variation with 
  $\dd {\mathcal E}_t= \underline c(X_t) {\mathcal E}_t \dd t$,
   the integration by parts formula of stochastic calculus 
   \cite[Corollary~2 to Theorem~II.22]{Pro-sc}
   shows that
  $$
    {\mathcal E}_t f(X_t) 
    - \int_0^t {\mathcal E}_s {\mathcal G}f(X_s)\dd s
    -  \int_0^t  \underline c(X_s) {\mathcal E}_sf(X_s)\dd s
    = 
    {\mathcal E}_t f(X_t) - \int_0^t {\mathcal E}_s \bar{\mathcal A}f(X_s)\dd s
  $$
  is a local martingale.  Plainly, this local martingale 
  remains bounded on any finite time interval,
  and is therefore a true martingale, by \cite[Theorem~I.51]{Pro-sc}.
  We deduce, by taking expectations and using Fubini's theorem,
  that
  $$ 
    \EE_x( {\mathcal E}_t f(X_t) )- f(x)
    = \int_0^t  \EE_x( {\mathcal E}_s \bar{\mathcal{A}} f(X_s) )\,\dd s
  $$
  holds.
  Recalling Lemma \ref{L0}(i), this yields the identity 
  $\bar T_tf(x) =\EE_x( {\mathcal E}_t f(X_t) )$, and we conclude the proof with Lemma \ref{L0}(ii). 
\end{proof}

We mention that the Feynman-Kac representation of the growth-fragmentation
\linebreak
semigroup given in Lemma \ref{Le1} can also be viewed as a `many-to-one formula' 
in the setting of branching particle systems (see, for instance, section 1.3 in \cite{Shi}).
Informally, the growth-fragmentation equation describes the evolution of the intensity of
a stochastic system of branching particles that grow at rate $c$ and split randomly 
according to $k$. In this setting,  the Markov process $(X_t)_{t\geq 0}$ with generator
${\mathcal G}$ arises by following the trajectory of a distinguished
particle in the system, such that after each dislocation event
involving the distinguished particle, the new distinguished particle 
is selected amongst the new particles according to a size-biased sampling.
This particle is referred to as the `tagged fragment' in certain cases
of the growth-fragmentation equation, and we will make this connection
more explicit in \autoref{s:levy}.

In order to study the long time asymptotic behaviour of the 
growth-fragmentation semigroup, we seek to understand
how $ \EE_x[{\mathcal E}_t f(X_t)/X_t]$
behaves as $t\to \infty$.
We shall tackle this issue in the rest of this work by adapting ideas and techniques
of ergodicity for general nonnegative operators,
which have been developed mainly in the discrete time setting in the literature;
see \citet{Num-mc} and \citet{Seneta} for a comprehensive introduction.
We shall rely heavily on the fact that the piecewise deterministic Markov process
$X$ has no positive jumps, and as a consequence,
the probability that the process hits any given single point is positive
(points are `non-polar'.)
This enables us to apply the regenerative property of the process at the 
sequence of  times when it returns  to its starting point.

\section{The spectral radius} 
\label{s:malthus}

Our goal now is to use our knowledge of the Markov process $X$
in order to find the parameter $\rho$ which governs the
decay or growth of solutions to the growth-fragmentation equations.

We introduce
$$H(x)\coloneqq \inf\left\{ t>0: X_t=x\right\},$$
the first hitting time of $x>0$ by $X$.
We stress that, when $X$ starts from $X_0=x$,
$H(x)$ is the first instant (possibly infinite) at which $X$ \emph{returns}
for the first time to $x$. 
Given $x,y>0$, the Laplace transform 
$$
  L_{x,y}(q)
  \coloneqq
  \EE_x\left( \e^{-q H(y)}{\mathcal E}_{H(y)} , H(y)<\infty\right),
  \qquad q\in\RR,
$$
will play a crucial role in our analysis. 
We first state a few elementary facts which will be useful in the sequel. 

Since $X$ is irreducible, we have
$\PP_x(H(y)<\infty)>0$.
Moreover,
${\mathcal E}_{H(y)}>0$ on the event $H(y)<\infty$, 
from which it follows that $L_{x,y}(q)\in(0,\infty]$. The function
$L_{x,y}\from \RR\to (0,\infty]$ is convex, non-increasing,
and right-continuous at the boundary point of its 
domain (by monotone convergence).
Furthermore, we have  $\e^{-qt}{\mathcal E}_t\leq 1$
for every $q> \|\underline c\|_{\infty}$, and then $L_{x,y}(q)<1$;
indeed,  
$$
  \lim_{q\to -\infty} L_{x,y}(q)=\infty
  \quad \text{and} \quad
  \lim_{q\to +\infty} L_{x,y}(q)=0.
$$

The next result is crucial for the identification of the spectral radius. 
\begin{proposition}\label{P-1}%
  Let $q\in\RR$ with $L_{x_0, x_0}(q) < 1$ for some $x_0>0$. Then $L_{x, x}(q) < 1$ for all $x>0$.
\end{proposition}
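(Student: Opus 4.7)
The strategy is a strong Markov / first-passage decomposition that relates $L_{x_0,x_0}(q)$ to $L_{x,x}(q)$ through the two ``crossover'' transforms $L_{x,x_0}(q)$ and $L_{x_0,x}(q)$, exploiting the multiplicativity $\e^{-q(s+t)}\mathcal{E}_{s+t} = \e^{-qs}\mathcal{E}_s \cdot (\e^{-qt}\mathcal{E}_t)\circ\theta_s$ of the Feynman--Kac weight along trajectories of $X$. Set $p \coloneqq L_{x_0,x_0}(q) < 1$ and introduce the non-negative quantities
\begin{align*}
\alpha &\coloneqq \EE_{x_0}\!\left[\e^{-qH(x)}\mathcal{E}_{H(x)};\, H(x)<H(x_0)\right], &
\beta &\coloneqq \EE_{x_0}\!\left[\e^{-qH(x_0)}\mathcal{E}_{H(x_0)};\, H(x_0)<H(x)\right],\\
\gamma &\coloneqq \EE_{x}\!\left[\e^{-qH(x)}\mathcal{E}_{H(x)};\, H(x)<H(x_0)\right], &
\delta &\coloneqq \EE_{x}\!\left[\e^{-qH(x_0)}\mathcal{E}_{H(x_0)};\, H(x_0)<H(x)\right].
\end{align*}

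Conditioning on which of $H(x), H(x_0)$ occurs first and applying the strong Markov property, together with the multiplicativity of the weight, at this hitting time yields the linear system
\begin{align*}
L_{x_0,x_0}(q) &= \beta + \alpha\, L_{x,x_0}(q), &
L_{x,x_0}(q) &= \delta + \gamma\, L_{x,x_0}(q),\\
L_{x,x}(q) &= \gamma + \delta\, L_{x_0,x}(q), &
L_{x_0,x}(q) &= \alpha + \beta\, L_{x_0,x}(q).
\end{align*}
Irreducibility of $X$, combined with the fact that successive returns to a given point do not accumulate in finite time (thanks to the PDMP structure: bounded jump rate and strictly positive flow time needed to revisit $x_0$), gives $\alpha,\delta>0$ and $L_{x,x_0}(q), L_{x_0,x}(q)>0$. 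The first equation then says $\alpha L_{x,x_0}(q) = p-\beta$, so $\beta\leq p<1$ and $L_{x,x_0}(q)<\infty$; the second forces $(1-\gamma) L_{x,x_0}(q) = \delta > 0$, whence $\gamma<1$ and $\alpha\delta = (p-\beta)(1-\gamma)$. The fourth equation yields $L_{x_0,x}(q) = \alpha/(1-\beta)$, and substituting into the third gives
\[
L_{x,x}(q) \;=\; \gamma + \frac{\alpha\delta}{1-\beta} \;=\; \gamma + (1-\gamma)\,\frac{p-\beta}{1-\beta} \;<\; \gamma + (1-\gamma) \;=\; 1,
\]
the strict inequality using $(p-\beta)/(1-\beta)<1$ (equivalent to $p<1$) and $1-\gamma>0$.

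The main obstacle I anticipate is the positivity $\alpha,\delta>0$: irreducibility immediately gives $\PP_{x_0}(H(x)<\infty)>0$, but upgrading this to $\PP_{x_0}(H(x)<H(x_0))>0$ requires ruling out the a priori possibility that every visit to $x$ under $\PP_{x_0}$ is preceded by a return to $x_0$. This is handled by noting that the successive return times $\sigma_k$ to $x_0$ tend to $+\infty$ almost surely (since under $\PP_{x_0}$ the process first follows the upward flow for a strictly positive time before any jump can occur), so that otherwise an inductive strong Markov argument would force $\PP_{x_0}(H(x)<\infty)=0$, contradicting irreducibility. The analogous statement for $\delta$ is proved identically.
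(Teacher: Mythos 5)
Your proof is correct in spirit and follows essentially the same route as the paper: decompose at the first of the two hitting times $H(x)\wedge H(x_0)$, apply the strong Markov property and the multiplicativity of $\e^{-qt}\mathcal{E}_t$, and use irreducibility to make the coefficients positive. The paper organizes this as two geometric-series identities plus a product inequality, whereas you package all four decompositions into a linear system and eliminate; the underlying content is identical. Your care over $\PP_{x_0}(H(x)<H(x_0))>0$ is a slightly fuller justification of a step the paper simply asserts from irreducibility.

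One small wrinkle: the equation $L_{x_0,x}(q)=\alpha+\beta L_{x_0,x}(q)$, derived by a one-step strong Markov decomposition, is an identity in $[0,\infty]$, and when $\beta>0$ it is satisfied with both sides infinite; so it does not by itself give $L_{x_0,x}(q)<\infty$, which you need before substituting into the third equation. (You did establish finiteness of $L_{x,x_0}(q)$ directly from the first equation, since $\alpha>0$ multiplies it there, but the fourth equation has $\beta$, which may vanish.) The clean fix — and what the paper does — is to derive the fourth identity as a convergent geometric series over excursions away from $x_0$, $L_{x_0,x}(q)=\alpha\sum_{n\ge 0}\beta^n$, using $\alpha<\infty$ and $\beta\le p<1$; this simultaneously yields the formula and its finiteness. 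Alternatively, one can first obtain the product inequality $L_{x_0,x}(q)L_{x,x_0}(q)<1$ as in the paper, which together with $L_{x,x_0}(q)>0$ already forces $L_{x_0,x}(q)<\infty$. With that patch the argument is complete.
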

\begin{proof} 
  Let  $x\not = x_0$
  and observe first from the strong Markov property 
  applied at the first hitting time $H(x)$, that 
  \begin{eqnarr*}
    1&> & \EE_{x_0}({\mathcal E}_{H(x_0)} \e^{-q  H(x_0)}, H(x_0)<\infty) \\
    &\geq& \EE_{x_0}({\mathcal E}_{H(x_0)} \e^{-q  H(x_0)}, H(x)<H(x_0)<\infty) \\
    &= &\EE_{x_0}({\mathcal E}_{H(x)} \e^{-q  H(x)}, H(x)<H(x_0)) 
    \EE_x({\mathcal E}_{H(x_0)} \e^{-q  H(x_0)}, H(x_0)<\infty) \\
    &=& \EE_{x_0}({\mathcal E}_{H(x)} \e^{-q  H(x)} , H(x)<H(x_0)) L_{x,x_0}(q ).
  \end{eqnarr*}
  Since  $\PP_{x_0}(H(x)<H(x_0))>0$, because $X$ is irreductible, this entails that 
  $$
    0< \EE_{x_0}({\mathcal E}_{H(x)} \e^{-q  H(x)} , H(x)<H(x_0))< \infty 
    \quad \text{and} \quad 
    0<L_{x,x_0}(q )<\infty.
  $$

  Next, we work under $\PP_{x_0}$ and write $0=R_0< H(x_0)=R_1< \dotsb$ 
  for the sequence of return times at $x_0$. Using the regeneration 
  at those times, we get
  \begin{eqnarr*}
    L_{x_0,x}(q ) 
    &=& \sum_{n=0}^{\infty} \EE_{x_0}({\mathcal E}_{H(x)} \e^{-q  H(x)},
    R_n < H(x) < R_{n+1}) \\
    &=& \sum_{n=0}^{\infty} \EE_{x_0}({\mathcal E}_{R_n} \e^{-q  R_n}, R_n < H(x))
    \EE_{x_0}({\mathcal E}_{H(x)} \e^{-q  H(x)},  H(x)< R_1) \\
    &=&  \EE_{x_0}({\mathcal E}_{H(x)} \e^{-q  H(x)},  H(x)< H(x_0)) 
    \sum_{n=0}^{\infty} \EE_{x_0}({\mathcal E}_{H(x_0)} \e^{-q  H(x_0)}, H(x_0) < H(x))^n
  \end{eqnarr*}
  Plainly, 
  $$
    \EE_{x_0}({\mathcal E}_{H(x_0)} \e^{-q  H(x_0)}, H(x_0) < H(x))
    \leq  \EE_{x_0}({\mathcal E}_{H(x_0)} \e^{-q  H(x_0)}, H(x_0) <\infty)<  1,
  $$ 
  and summing the geometric series, we get
  \begin{eqnarr*}
    L_{x_0,x}(q )
    &=& \frac{\EE_{x_0}({\mathcal E}_{H(x)} \e^{-q  H(x)},  H(x)< H(x_0))}
    {1-\EE_{x_0}({\mathcal E}_{H(x_0)} \e^{-q  H(x_0)}, H(x_0) < H(x))} \\
    &< & \frac{\EE_{x_0}({\mathcal E}_{H(x)} \e^{-q  H(x)},  H(x)< H(x_0))}
    {\EE_{x_0}({\mathcal E}_{H(x_0)} \e^{-q  H(x_0)}, H(x) < H(x_0)<\infty)} 
    = \frac{1}{L_{x,x_0}(q )}\,,
  \end{eqnarr*}
  where the last equality follows from the strong Markov property 
  applied at time $H(x)$ (and we stress that the ratio in the middle is
  positive and finite.)
  Hence, we have
  \begin{equation} \label{e:ineq}
    L_{x_0,x}(q ) L_{x,x_0}(q )<  1.
  \end{equation}

  We next perform a similar calculation, but now  under $\PP_x$.
  Using regeneration at  return times at $x$ as above, we see that
  $$
    L_{x,x_0}(q )
    = 
    \EE_{x}({\mathcal E}_{H(x_0)} \e^{-q  H(x_0)},  H(x_0)< H(x)) 
    \sum_{n=0}^{\infty} \EE_{x}({\mathcal E}_{H(x)} \e^{-q  H(x)}, H(x) < H(x_0)) ^n.
  $$
  Since we know that $L_{x,x_0}(q )<\infty$, the geometric series above converges, so 
  $$\EE_{x}({\mathcal E}_{H(x)} \e^{-q  H(x)}, H(x) < H(x_0))<1,$$
  and
  $$
    L_{x,x_0}(q ) 
    =
    \frac{ \EE_{x}({\mathcal E}_{H(x_0)} \e^{-q  H(x_0)},  H(x_0)< H(x))}
    {1-\EE_{x}({\mathcal E}_{H(x)} \e^{-q  H(x)}, H(x) < H(x_0))}.
  $$
  Multiplying by $L_{x_0,x}(q )$ and using \eqref{e:ineq}, we deduce that 
  \begin{eqnarr*}
    1-\EE_{x}({\mathcal E}_{H(x)} \e^{-q  H(x)}, H(x) < H(x_0))
    &> &
    \EE_{x}({\mathcal E}_{H(x_0)} \e^{-q  H(x_0)},  H(x_0)< H(x)) L_{x_0,x}(q )\\
    &=& \EE_{x}({\mathcal E}_{H(x)} \e^{-q  H(x)},  H(x_0)< H(x)<\infty),
  \end{eqnarr*}
  where again the last equality is seen from the strong Markov property. It follows  that $\EE_{x}({\mathcal E}_{H(x)} \e^{-q  H(x)}, H(x) <\infty)=L_{x,x}(q)< 1$. 
\end{proof}

We next fix some arbitrary point $x_0>0$, and introduce a fundamental quantity. 
\begin{definition}\label{D1} 
  We call
  $$
    \rho\coloneqq \inf\{q\in\RR: L_{x_0,x_0}(q) < 1\}
  $$
  the \emph{spectral radius} of the growth-fragmentation operator $\mathcal{A}$.
\end{definition}
We stress that Proposition \ref{P-1} shows in particular that the
spectral radius $\rho$ does not depend on the choice of $x_0$. We next justify the terminology by observing that, if $q<\rho$, then 
$$\int_0^{\infty} \e^{-qt} T_t{f}(x) \dd t=\infty$$
for all $x>0$ and all continuous functions 
$f\colon (0,\infty)\to \RR_+$ with $f\not\equiv 0$, 
whereas, if $q>\rho$, then  there exists a function $f$
which is everywhere positive, and such that
$$\int_0^{\infty} \e^{-qt} T_t{f}(x) \dd t<\infty$$
for all $x>0$.
The following result actually provides a slightly stronger statement.

\begin{proposition}\label{P-3}%
  Let $q\in\RR$.
  \begin{enumerate}
    \item
    If $L_{x,x}(q)\geq 1$, then for every  $f\colon(0,\infty)\to [0,\infty)$ continuous with
    $f\not \equiv 0$, we have
    $$\int_0^{\infty} \e^{-qt} T_t{f}(x) \dd t=\infty.$$
    
    \item 
    If $L_{x,x}(q)< 1$, then there exists a function 
    $f\colon (0,\infty)\to (0,\infty]$ with
    $$\lim_{t \to 0} \e^{-qt} T_t{f}(x) =0.$$
  \end{enumerate}
\end{proposition}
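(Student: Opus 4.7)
My approach is to leverage the Feynman--Kac representation (Lemma~\ref{Le1}) together with the regenerative structure of $X$ at its successive visits to a distinguished point, much as in the proof of Proposition~\ref{P-1}.

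For~(i), let $0 = R_0 < R_1 < R_2 < \dotsb$ denote the successive return times of $X$ to $x$ under $\PP_x$, so that $R_1 = H(x)$. Feynman--Kac and Fubini give
\[
\int_0^\infty \e^{-qt} T_t f(x)\, \dd t = x\, \EE_x\!\left(\sum_{n\ge 0} \int_{R_n}^{R_{n+1}} \e^{-qt}\mathcal{E}_t\, \frac{f(X_t)}{X_t}\, \dd t\right).
\]
The strong Markov property at $R_n$, combined with regeneration, shows that the expectation of the $n$-th summand equals $L_{x,x}(q)^n\cdot A$, where $A \coloneqq \EE_x\bigl(\int_0^{R_1} \e^{-qt}\mathcal{E}_t\, f(X_t)/X_t\,\dd t\bigr)$. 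Since $L_{x,x}(q) \ge 1$ the series $\sum_n L_{x,x}(q)^n$ diverges, so it remains to show $A>0$. Continuity of $f$ and $f\not\equiv 0$ supply $y\ne x$ and a neighbourhood $V$ of $y$ on which $f\ge \delta > 0$; irreducibility of $X$ and its piecewise-deterministic structure (continuous upward flow, only downward jumps) guarantee that with positive probability $X$ spends a strictly positive Lebesgue time in $V$ on $[0,R_1)$, and since $\mathcal{E}_t > 0$, this forces $A > 0$.

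For~(ii), I read the stated conclusion as $\lim_{t\to\infty}\e^{-qt}T_t f(x) = 0$, the printed ``$t\to 0$'' evidently being a typo (at $t=0$ one would need $f(x)=0$, against $f>0$). Fix $x_0>0$ and set $\ell(y) \coloneqq L_{y,x_0}(q)$, $f(y) \coloneqq y\,\ell(y)$; the argument of Proposition~\ref{P-1} already yields $\ell(y)\in(0,\infty)$ for every $y$, so $f$ is everywhere positive and finite. Applying the Markov property at the deterministic time $t$,
\[
\e^{-qt} T_t f(x) = x\, \EE_x\!\left(\e^{-q H^{(t)}(x_0)}\mathcal{E}_{H^{(t)}(x_0)},\, H^{(t)}(x_0) < \infty\right),
\]
where $H^{(t)}(x_0) \coloneqq \inf\{s>t: X_s = x_0\}$.

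To pass to the limit, I decompose the event $\{H^{(t)}(x_0)<\infty\}$ according to which visit of $X$ to $x_0$ occurs first after time $t$. Writing $S_0 \coloneqq H(x_0)$ and $S_k$ for the subsequent returns to $x_0$ under $\PP_x$, strong Markov at $S_{k-1}$ factorises the contribution of $\{S_{k-1}\le t < S_k\}$ as
\[
\EE_x\!\left(\e^{-qS_{k-1}}\mathcal{E}_{S_{k-1}}\, g(t-S_{k-1}),\, S_{k-1}\le t\right),\quad g(v)\coloneqq \EE_{x_0}\!\left(\e^{-qH(x_0)}\mathcal{E}_{H(x_0)},\, H(x_0) > v,\, H(x_0) < \infty\right).
\]
The function $g$ is non-increasing with $g(0) = L_{x_0,x_0}(q) < 1$ and $g(v)\to 0$; the $k$-th contribution is thus dominated by $L_{x,x_0}(q)\,L_{x_0,x_0}(q)^k$ (Proposition~\ref{P-1} ensures $L_{x,x_0}(q) < \infty$), and $\sum_{k\ge 1}L_{x,x_0}(q)\,L_{x_0,x_0}(q)^k < \infty$. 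Dominated convergence for series then lets me pass $t\to\infty$ through the sum, and the ``before first hit of $x_0$'' term $\EE_x(\e^{-qH(x_0)}\mathcal{E}_{H(x_0)},\, t<H(x_0)<\infty)$ vanishes by dominated convergence as well. The main obstacle is the careful bookkeeping of this renewal-like decomposition; once organised, the decisive geometric domination follows directly from the hypothesis $L_{x,x}(q) < 1$.
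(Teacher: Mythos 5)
Your proof is correct and follows essentially the same route as the paper: part (i) uses the identical regeneration/geometric-series decomposition of the resolvent into excursion cycles, and part (ii) makes the same choice $f(y)=yL_{y,\cdot}(q)$, the same Markov-property rewrite in terms of the first post-$t$ hitting time, and the same renewal decomposition plus dominated convergence (the paper anchors at $x_0=x$, you allow an arbitrary $x_0$, which is only a cosmetic difference). You have also correctly identified that the printed ``$t\to 0$'' in the statement is a typo for ``$t\to\infty$''.
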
 
\begin{proof}
  (i) Recall from Lemma \ref{Le1} that
  $$\int_0^{\infty} \e^{-qt} T_t{f}(x)\dd t 
    = x \EE_x\left(\int_0^{\infty} \e^{-q t } {\mathcal E}_t \underline f(X_t)\dd t\right).
  $$
  Decomposing $[0,\infty)$ according to the return times of $X$ at its 
  starting point and applying the regeneration property just as in the 
  proof of Proposition \ref{P-1}, we easily find
  that the quantity above equals 
  $$
    x \EE_x\left(\int_0^{H(x)} \e^{-q t } {\mathcal E}_t \underline f(X_t)\dd t\right)
    \sum_{n=0}^{\infty}
    \EE_x\left( \e^{-q H(x)}{\mathcal E}_{H(x)}, H(x)<\infty\right)^n.
  $$
  Now the first term above is positive since $f\geq 0$, $f\not \equiv 0$ and $X$ is irreducible, and the series diverges because
  $\EE_x\left( \e^{-q H(x)}{\mathcal E}_{H(x)}, H(x)<\infty\right)
  = L_{x,x}(q) \ge 1$. 
  
  (ii) We take $f(y)= y L_{y,x}(q)$ and observe from the Markov property and
  Lemma \ref{Le1} that then
  $$
    \e^{-qt} T_tf(x)
    =
    x \EE_x\left( \e^{-q R(t) } {\mathcal E}_{R(t)}, R(t)<\infty\right),
  $$
  where $R(t)$ denotes the first return time of $X$ to $x$ after time $t$.
  We use the notation $\theta_\cdot$ for the usual shift operator;
  that is, $(X_s, s\ge 0) \circ \theta_t = (X_{s+t}, s\ge 0)$.
  As before, we denote the sequence of return times of $X$ to
  its starting point by
  $R_0 = 0 < R_1 < \dotsb$.
  With this notation, we have that $R(t)=R_{n+1}$ if and only if $R_n\leq t$ and 
  $H(x)\circ \theta_{R_n}> t-R_n$. 
  Regeneration at the return times then enables us to express 
  $\e^{-qt} T_tf(x)$ as  
  \begin{align*}
  &  x \sum_{n=0}^{\infty} \int_{[0,t]} 
    \EE_x\left( \e^{-q R_n}{\mathcal E}_{R_n}, R_n\in \dd s\right)
    \EE_x\left( \e^{-q H(x)}{\mathcal E}_{H(x)}, t-s <H(x)<\infty\right)\\
   & {} \eqqcolon x\int_{[0,t]}U^q(x,\dd s) \varphi_x(t-s),
  \end{align*}

  On the one hand, we observe, again by regeneration, 
  that the total mass of the measure $U^q(x,\cdot) $ is given by
  $$
    U^q(x,[0,\infty))
    =
    \sum_{n=0}^{\infty} \EE_x\left( \e^{-q R_n}{\mathcal E}_{R_n}, R_n<\infty \right)
    = 
    \sum_{n=0}^{\infty} L_{x,x}(q)^n<\infty,
  $$
  On the other hand, since
  $$\EE_x\left( \e^{-q H(x)}{\mathcal E}_{H(x)}, H(x)<\infty\right)=L_{x,x}(q)<\infty,$$ 
  we know that $\lim_{t\to \infty} \varphi_x(t) = 0$. Hence, for every $s\geq 0$, we have $\lim_{t\to \infty} \varphi_x(t-s) = 0$,
  and since $0\leq \varphi_x(t-s) \leq L_{x,x}(q)$ and the measure $U^q(x,\cdot)$ is finite, 
 we can  conclude the proof by dominated convergence. 
\end{proof} 

We now conclude this section by describing
the following elementary bounds for the spectral radius. 
\begin{proposition}\label{P-2}
  \begin{enumerate}
    \item It always holds that  $\rho \leq \|\underline c\|_{\infty}$. 

    \item It holds that $\rho > 0$ whenever $X$ is recurrent; 
    furthermore, if $X$ is positive recurrent with stationary law $\pi$, then 
    $$\rho \geq \ip{\pi}{\underline c}.$$
  \end{enumerate}
\end{proposition}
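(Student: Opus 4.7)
The plan is to treat the three claims — (i), the positivity half of (ii), and the lower bound half of (ii) — separately, using only elementary manipulations of $L_{x,x}$ together with the occupation-measure representation of $\pi$ in the positive recurrent case.

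For (i), I would use the uniform bound $\mathcal{E}_t \le \e^{t\|\underline c\|_\infty}$ noted just before Proposition~\ref{P-1}. For any $q > \|\underline c\|_\infty$ this yields $\e^{-qt}\mathcal{E}_t \le \e^{-(q-\|\underline c\|_\infty)t}$, and since $H(x) > 0$ $\PP_x$-almost surely (the piecewise deterministic flow spends a strictly positive time in any neighbourhood of its starting point before its first jump), one obtains
\[
  L_{x,x}(q) \le \EE_x\bigl(\e^{-(q-\|\underline c\|_\infty)H(x)}, H(x) < \infty\bigr) < 1.
\]
By Definition~\ref{D1}, $\rho \le q$ for every such $q$, hence $\rho \le \|\underline c\|_\infty$.

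For the strict positivity of $\rho$ under recurrence, note that $\PP_x(H(x) < \infty) = 1$ together with $\underline c > 0$ and $H(x) > 0$ a.s.\ forces $\mathcal{E}_{H(x)} > 1$ a.s., and therefore $L_{x,x}(0) = \EE_x \mathcal{E}_{H(x)} > 1$. By monotone convergence as $q \downarrow 0$, $L_{x,x}(q) \uparrow L_{x,x}(0) > 1$, so $L_{x,x}$ stays strictly above $1$ on a right-neighbourhood $[0,\varepsilon)$ of $0$. This forces $\rho \ge \varepsilon > 0$ by Definition~\ref{D1}.

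The main step is the lower bound under positive recurrence. I would apply Jensen's inequality to the convex function $\exp$ to obtain
\[
  L_{x,x}(q) = \EE_x \exp\!\left(\int_0^{H(x)}(\underline c(X_s) - q)\,\dd s\right) \ge \exp\!\left(\EE_x \int_0^{H(x)}(\underline c(X_s) - q)\,\dd s\right),
\]
and then invoke the classical occupation-measure identity
\[
  \EE_x \int_0^{H(x)} g(X_s)\,\dd s = \EE_x(H(x))\,\langle \pi, g\rangle,
\]
valid for bounded measurable $g$. This identity is a direct consequence of the regenerative structure of $X$ at its returns to $x$ — the very structure already exploited in the proof of Proposition~\ref{P-1} — combined with the uniqueness of the invariant probability measure. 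Setting $g = \underline c - q$ yields $L_{x,x}(q) \ge \exp(\EE_x(H(x))(\langle \pi,\underline c\rangle - q))$, which exceeds $1$ whenever $q < \langle \pi,\underline c\rangle$; by Definition~\ref{D1}, $\rho \ge q$ for all such $q$, and sending $q \uparrow \langle \pi,\underline c\rangle$ concludes. The only point that requires care is the integrability hypothesis behind Jensen's inequality, which is immediate from the boundedness of $\underline c$ and from $\EE_x H(x) < \infty$ in the positive recurrent case; more conceptually, the main obstacle is simply confirming that the occupation-measure identity applies in our exact setting, but this is standard for Markov processes with an accessible atom, which we have at every point $x$ by the irreducibility assumption.
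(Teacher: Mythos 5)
Your argument is correct throughout. Parts (i) and the positivity half of (ii) follow the same route as the paper (the uniform bound $\mathcal{E}_t \le \e^{t\|\underline c\|_\infty}$ for (i); the observation $L_{x,x}(0) = \EE_x\mathcal{E}_{H(x)} > 1$ together with monotonicity of $L_{x,x}$ for the first part of (ii)). Where you genuinely diverge is in the lower bound $\rho \ge \ip{\pi}{\underline c}$: the paper regenerates at the return times $R_n$, writes $\EE_{x_0}(\e^{-qR_n}\mathcal{E}_{R_n}) = L_{x_0,x_0}(q)^n$, invokes the ergodic theorem to get $\ln\mathcal{E}_{R_n}\sim\ip{\pi}{\underline c}R_n$ almost surely, and then applies Fatou's lemma to force $L_{x_0,x_0}(q)>1$ whenever $q < \ip{\pi}{\underline c}$. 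You instead apply Jensen's inequality to the convex function $\exp$ in the single-excursion expression $L_{x,x}(q) = \EE_x\exp\bigl(\int_0^{H(x)}(\underline c(X_s)-q)\,\dd s\bigr)$ and then use Kac's occupation formula $\EE_x\int_0^{H(x)}g(X_s)\,\dd s = \EE_x(H(x))\ip{\pi}{g}$. Both arguments are sound and both hinge on positive recurrence (the paper through the a.s.\ ergodic limit along excursion endpoints, yours through $\EE_xH(x)<\infty$ making the Jensen integrand integrable and through the normalisation in Kac's formula). Your route is more direct, needing only a one-shot inequality rather than an asymptotic argument with a limiting contradiction; the paper's route has the virtue of foreshadowing the regeneration-at-$R_n$ machinery that it reuses later (e.g.\ in \autoref{Th1}), which is presumably why the authors chose it.
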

\begin{proof}
  (i) This follows from the elementary observations preceding Definition \ref{D1}. 

  (ii)
  If $X$ is recurrent, then  $\PP_x(H(x)<\infty)=1$ and 
  $L_{x,x}(0)=\EE_x({\mathcal E}_{H(x)})\in(1,\infty]$. 
  This forces $\rho > 0$, since  $L_{x,x}(\rho)\leq 1$ 
  by right-continuity of $L_{x,x}$.  
  Furthermore, we may apply the regeneration property at 
  the $n$-th return time of $X$ to $x_0$, say $R_n$, and observe that 
  $$ \EE_{x_0}\left(   \e^{-q R_n}{\mathcal E}_{R_n}\right)= L_{x_0, x_0}(q)^n$$
  converges to $0$ as $n\to \infty$ for every $q>\rho$. 
  By the ergodic theorem for positive recurrent Markov processes
  \cite[Theorem 20.20]{Kal},
  $$\ln {\mathcal E}_{R_n}=\int_0^{R_n}{\underline c}(X_s)\dd s 
    \sim  \ip{\pi}{\underline c}R_n 
    \quad \text{ as } n\to \infty, \quad\text{$\PP_{x_0}$-a.s.,}
  $$
  and we then see from Fatou's Lemma that 
  $\lim_{n\to \infty}  \EE_{x_0}\left(   \e^{-q R_n}{\mathcal E}_{R_n}\right)=\infty$,
  as long as $q< \ip{\pi}{\underline c}$. This entails our last claim.
\end{proof}

\section{A martingale multiplicative functional}
\label{s:mult}

In short, the purpose of this section is to construct a remarkable martingale 
which we will then use to transform the Markov process $X$. 
We shall obtain a recurrent Markov process $Y$ which in turn will enable us to reduce
the analysis of the asymptotic behaviour of $T_t$ to results from ergodic theory.
This  requires
the following assumption to hold: 
\begin{equation} \label{e:rt0} 
  \hbox{  $L_{x_0,x_0}(\rho)=1$.}
\end{equation} 
Note that, by the right-continuity of $L_{x, x}$, we always have
$L_{x_0,x_0}(\rho)\leq 1$. 
 
We start with some simple observations relating \eqref{e:rt0}
to the value of $L_{x_0,x_0}$ at the left endpoint of its domain. 

\begin{lemma}\label{Le2}
  Define $q_*\coloneqq \inf\{q\in\RR: L_{x_0,x_0}(q)<\infty\}$.  Then:
  \begin{enumerate}  
    \item Condition \eqref{e:rt0} holds if and only if $L_{x_0,x_0}(q_*)\in[1,\infty]$.
    \item If $L_{x_0,x_0}(q_*)\in (1,\infty]$, then $L_{x_0,x_0}$ possesses a finite right-derivative at $\rho$ and 
    $$\EE_{x_0}\left( H(x_0) \e^{-\rho H(x_0)} {\mathcal E}_{H(x_0)} , H(x_0)<\infty\right)
      = -L'_{x_0,x_0}(\rho)<\infty.$$
  \end{enumerate}
\end{lemma}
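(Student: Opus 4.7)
My plan is to treat both parts by exploiting the basic analytic properties of $L \coloneqq L_{x_0,x_0}$ recalled in the discussion preceding Proposition \ref{P-1}: it is convex and non-increasing on $\RR$, right-continuous at $q_*$ (by monotone convergence), continuous on the open interval $(q_*,\infty)$ of finiteness, equal to $+\infty$ on $(-\infty, q_*)$, and decreases to $0$ as $q \to +\infty$.

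Part (i) reduces to a short case analysis. In the forward direction, if $L(\rho) = 1$, then $\rho \geq q_*$ (otherwise $L(\rho) = \infty$), and monotonicity gives $L(q_*) \geq L(\rho) = 1$. Conversely, if $L(q_*) \in [1,\infty]$, then $L$ starts at $q_*$ with value $\geq 1$, is right-continuous there, is continuous and non-increasing on $(q_*,\infty)$, and tends to $0$ at $+\infty$; hence the set $\{q : L(q) < 1\}$ has the form $(\rho,\infty)$ for some $\rho \geq q_*$, and continuity at $\rho$ forces $L(\rho) = 1$.

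For part (ii), the strict inequality $L(q_*) > 1$ combined with part (i) forces $\rho > q_*$: otherwise right-continuity at $q_*$ would give $L(\rho) = L(q_*) > 1$, contradicting $L(\rho) = 1$. Hence I can choose $\epsilon > 0$ with $\rho - \epsilon > q_*$, so that $L(\rho - \epsilon) < \infty$. I would then compute the right derivative by monotone convergence applied to
\[
  \frac{L(\rho+h) - L(\rho)}{h} = \EE_{x_0}\!\left( \frac{e^{-(\rho+h) H(x_0)} - e^{-\rho H(x_0)}}{h}\, \mathcal{E}_{H(x_0)}, H(x_0) < \infty\right)\!.
\]
Convexity of $q \mapsto e^{-qt}$ makes the chord slope $(e^{-(\rho+h)t} - e^{-\rho t})/h$ non-decreasing in $h>0$ with limit $-t e^{-\rho t}$ as $h \downarrow 0$, so the non-negative quantities $-(e^{-(\rho+h)t} - e^{-\rho t})/h$ increase up to $t e^{-\rho t}$. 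Monotone convergence yields $-L'_+(\rho) = \EE_{x_0}(H(x_0) e^{-\rho H(x_0)} \mathcal{E}_{H(x_0)}, H(x_0)<\infty)$, and its finiteness follows from the elementary bound $t e^{-\rho t} = (t e^{-\epsilon t}) e^{-(\rho-\epsilon) t} \leq (e\epsilon)^{-1} e^{-(\rho-\epsilon) t}$ together with $L(\rho-\epsilon) < \infty$.

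The only real technical subtlety is bookkeeping in part (ii): keeping the direction of monotonicity straight when converting convexity of $q \mapsto e^{-qt}$ into the correct monotonicity of the integrands, and selecting $\epsilon > 0$ to dominate the limit. Everything else reduces to standard properties of convex functions and Laplace transforms, so I would expect no serious obstacle.
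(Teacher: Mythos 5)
Your proposal is correct and follows essentially the same route as the paper: part (i) is a straightforward case analysis using monotonicity, right-continuity at $q_*$, and the limit $L\to 0$ at $+\infty$; and part (ii) hinges, exactly as in the paper, on the observation that $L_{x_0,x_0}(q_*)>1$ forces $\rho>q_*$, so that $\rho$ lies in the interior of the finiteness interval and convexity yields a finite right-derivative. You supply more detail than the paper (which simply cites convexity and does not spell out the monotone-convergence identification of $-L'(\rho)$ with the expectation), but the underlying ideas coincide.
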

\begin{proof}
Recall that $q_*\leq \|\underline c\|_{\infty} $ and that $L_{x_0, x_0}$  is convex and decreasing. We have  
$$\lim_{q\to \infty}L_{x_0, x_0}(q)=0 \quad \hbox{and} \quad \lim_{q\to q_*+}L_{x_0, x_0}(q)=L_{x_0, x_0}(q_*)$$
by dominated convergence for the first limit, and  by monotone convergence for the second. 
This yields our first claim. For the second, it suffices to observe that if $L_{x_0,x_0}(q_*)>1$, then $\rho > q_*$ and thus, by convexity,  the right derivative of $L_{x_0, x_0}$ at $\rho$ is finite. 
\end{proof}

We assume throughout the rest of this section that  \eqref{e:rt0} holds, 
and describe some remarkable properties of the function $(x,y)\mapsto L_{x,y}(\rho)$ 
which follow from this assumption.

\begin{lemma} \label{Le3} Assume that  \eqref{e:rt0} holds for some $x_0>0$. Then
  \begin{enumerate}
    \item $L_{x,x}(\rho)=1$ for all $x>0$,
    i.e., \eqref{e:rt0} actually holds with $x_0$ replaced by any $x>0$. 

    \item For all $x,y>0$, we have 
    $$L_{x,y}(\rho)L_{y,x}(\rho)=1.$$

    \item For all $x,y,z>0$, there is the identity 
    $$L_{x,y}(\rho)L_{y,z}(\rho)=L_{x,z}(\rho).$$
  \end{enumerate}
\end{lemma}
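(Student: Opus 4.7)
The plan is to prove the three statements in order, each using the strong Markov property at a first hitting time together with the assumption $L_{x_0,x_0}(\rho)=1$. The scheme is essentially an excursion decomposition from $x_0$ that converts the integral identities into algebraic ones.

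For (i), fix $x\neq x_0$ and introduce the four non-negative quantities
\begin{eqnarr*}
a &=& \EE_{x_0}(\e^{-\rho H(x_0)}\mathcal{E}_{H(x_0)},\,H(x_0)<H(x)), \\
b &=& \EE_{x_0}(\e^{-\rho H(x)}\mathcal{E}_{H(x)},\,H(x)<H(x_0)), \\
c &=& \EE_{x}(\e^{-\rho H(x)}\mathcal{E}_{H(x)},\,H(x)<H(x_0)), \\
d &=& \EE_{x}(\e^{-\rho H(x_0)}\mathcal{E}_{H(x_0)},\,H(x_0)<H(x)).
\end{eqnarr*}
The strong Markov property applied at $H(x_0)$ and at $H(x)$ should yield the identities
$$1=L_{x_0,x_0}(\rho)=a+b\,L_{x,x_0}(\rho),\quad L_{x_0,x}(\rho)=b+a\,L_{x_0,x}(\rho),\quad L_{x,x_0}(\rho)=d+c\,L_{x,x_0}(\rho).$$
Provided $a<1$ and $c<1$, the last two give $L_{x_0,x}(\rho)=b/(1-a)$ and $L_{x,x_0}(\rho)=d/(1-c)$; substituting into the first yields the pivotal relation $(1-a)(1-c)=bd$, from which one deduces in a single line both $L_{x,x}(\rho)=c+dL_{x_0,x}(\rho)=c+(1-c)=1$ and $L_{x_0,x}(\rho)L_{x,x_0}(\rho)=1$.

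With (i) available for all $x$, assertion (ii) for a general pair $(y,z)$ is obtained by rerunning the argument of (i) with $y$ in place of $x_0$ and $z$ in place of $x$, which is legitimate because (i) gives $L_{y,y}(\rho)=1$. For (iii), I would decompose starting from $x$ according to whether $H(y)$ or $H(z)$ comes first; writing
$$\alpha=\EE_x(\e^{-\rho H(z)}\mathcal{E}_{H(z)},\,H(z)<H(y)),\qquad \beta=\EE_x(\e^{-\rho H(y)}\mathcal{E}_{H(y)},\,H(y)<H(z)),$$
the strong Markov property gives $L_{x,z}(\rho)=\alpha+\beta L_{y,z}(\rho)$ and $L_{x,y}(\rho)=\beta+\alpha L_{z,y}(\rho)$. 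Multiplying the second identity by $L_{y,z}(\rho)$ and invoking (ii) in the form $L_{z,y}(\rho)L_{y,z}(\rho)=1$ collapses the product into $\beta L_{y,z}(\rho)+\alpha=L_{x,z}(\rho)$, as required.

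The point of care will be justifying $a<1$ and $c<1$ in (i), since one must divide by $1-a$ and $1-c$. I expect both to follow from irreducibility of $X$ together with the fact that $X$ has no positive jumps: irreducibility gives $\PP_x(H(x_0)<\infty)>0$, and a regenerative decomposition of $X$ into excursions from $x_0$ (respectively from $x$) ensures $\PP_{x_0}(H(x)<H(x_0))>0$ and $\PP_{x}(H(x_0)<H(x))>0$, so both $b$ and $d$ are strictly positive; combined with $L_{x,x_0}(\rho)>0$, the identity $1=a+bL_{x,x_0}(\rho)$ then rules out $a=1$, and $L_{x,x_0}(\rho)=d+cL_{x,x_0}(\rho)$ rules out $c=1$. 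Once this positivity is secured, the remainder of the argument is routine algebra.
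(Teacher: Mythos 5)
Your proof is correct, and it takes a genuinely different route from the paper's. The comparison is as follows.

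For (i), the paper's proof is a two-line argument exploiting Proposition~\ref{P-1}: that proposition (applied with $x$ playing the role of $x_0$) shows both that $L_{x,x}(\rho)<1$ is impossible and that $\rho = \inf\{q: L_{x,x}(q)<1\}$, after which right-continuity forces $L_{x,x}(\rho)=1$. You instead give a self-contained first-passage decomposition, which has the advantage of producing (i) and the $(x_0,x)$ case of (ii) simultaneously from the single relation $(1-a)(1-c)=bd$. For (ii), the paper re-runs the geometric-series regeneration of Proposition~\ref{P-1} to get $L_{x,y}(\rho)$ as a quotient equal to $1/L_{y,x}(\rho)$, whereas you simply re-run your (i) argument for a general pair once $L_{y,y}(\rho)=1$ is available. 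For (iii), the paper invokes the downward skip-free property of $X$ (no positive jumps, so $H(y)<H(z)$ for $x<y<z$) to handle the ordered case and then uses (ii) to permute; your decomposition from $x$ according to which of $H(y),H(z)$ occurs first, together with $L_{z,y}(\rho)L_{y,z}(\rho)=1$, gives (iii) without any case analysis on the ordering and without using the skip-free structure. Your argument is more uniform and arguably cleaner for (iii); the paper's is shorter for (i) since it can lean on the already-proved Proposition~\ref{P-1}.

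One step you should make explicit: solving $L_{x_0,x}(\rho)=b+a\,L_{x_0,x}(\rho)$ for $L_{x_0,x}(\rho)=b/(1-a)$ tacitly requires $L_{x_0,x}(\rho)<\infty$, which is not automatic from the identity alone (the equality $\infty=b+a\cdot\infty$ holds whenever $a>0$). The correct justification is the full regenerative decomposition $L_{x_0,x}(\rho)=\sum_{n\ge 0} a^n b$, which converges because $a<1$; this is exactly the geometric-series step the paper carries out with care in the proof of Proposition~\ref{P-1}. You secured $L_{x,x_0}(\rho)<\infty$ from $1=a+bL_{x,x_0}(\rho)$ but did not address $L_{x_0,x}(\rho)$, so add that line.
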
 

\begin{proof} (i) Indeed, the strict inequality  $L_{x,x}(\rho)< 1$ is ruled out by Proposition \ref{P-1}. On the other hand, we always have $L_{x,x}(\rho)\leq 1$ by the right-continuity of $L_{x,x}$, since, again by Proposition \ref{P-1}, $\rho=\inf\{q\in\RR: L_{x,x}(q)<1\}$. 

(ii)  Using the regeneration at return times at $x$ just as in the proof of Proposition \ref{P-1}, we easily get
\begin{eqnarray*}
L_{x,y}(\rho )&=& \frac{\EE_{x}({\mathcal E}_{H(y)} \e^{-\rho  H(y)},  H(y)< H(x))}{1-\EE_{x}({\mathcal E}_{H(x)} \e^{-\rho  H(x)}, H(x) < H(y))}\\
&=& \frac{\EE_{x}({\mathcal E}_{H(y)} \e^{-\rho  H(y)},  H(y)< H(x))}{\EE_{x}({\mathcal E}_{H(x)} \e^{-\rho  H(x)}, H(y) < H(x)<\infty)}  = \frac{1}{L_{y,x}(\rho )}\,,
\end{eqnarray*} where the last equality follows from the strong Markov property applied at time $H(y)$. 

(iii) Finally, recall that $X$ has no positive jumps, so for every $x<y<z$, we have $H(y)<H(z)$, $\PP_x$-a.s. on the event $H(z)<\infty$, and the strong Markov property readily yields (iii) in that case. Using (ii), it is then easy to deduce that (iii) holds in full generality, no matter the relative positions of $x,y$ and $z$. 
\end{proof}

\begin{corollary} \label{C5}
  The function $(x,y)\mapsto L_{x,y}(\rho)$ is continuous
  on $(0,\infty)$ in each of the variables $x$ and $y$.
\end{corollary}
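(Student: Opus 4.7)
The strategy exploits the multiplicative structure from Lemma~\ref{Le3} combined with a short-time coupling via the deterministic flow of $X$. Fix $z_0 > 0$ arbitrarily and set $h(x) := L_{x, z_0}(\rho) > 0$. Combining Lemma~\ref{Le3}(ii) and (iii) yields $L_{x, y}(\rho) = h(x)/h(y)$, so continuity of $(x, y) \mapsto L_{x, y}(\rho)$ in each variable separately is equivalent to continuity of the positive function $h$. A further application of Lemma~\ref{Le3}(iii) reduces this to the single assertion
\[
  L_{x, a}(\rho) \longrightarrow 1 = L_{a, a}(\rho) \qquad \text{as } x \to a,
\]
for every fixed $a > 0$. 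I handle the two sides of the limit separately, using that $X$ is piecewise deterministic and has no positive jumps.

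For $x \downarrow a$, let $\tau := \int_a^x \dd u / c(u)$ (so $\tau \to 0$) and consider under $\PP_a$ the event $A := \{T_1 > \tau\}$ that no jump occurs in $[0, \tau]$. On $A$ the trajectory flows deterministically to $x$ at time $\tau$, so by the strong Markov property $(X_{s+\tau})_{s \geq 0}$ has law $\PP_x$; in particular $H(a) > \tau$ and $H(a) - \tau$ has the distribution of $H(a)$ under $\PP_x$. Decomposing $L_{a, a}(\rho) = 1$ on $A$ and $A^c$ yields
\[
  1 = \PP_a(A)\, e^{-\rho \tau}\, \mathcal{E}^a_\tau\, L_{x, a}(\rho) + \EE_a\bigl[e^{-\rho H(a)} \mathcal{E}_{H(a)}\, \Ind_{A^c \cap \{H(a) < \infty\}}\bigr],
\]
where $\mathcal{E}^a_\tau$ is deterministic on $A$. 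As $x \downarrow a$, one has $\Ind_{A^c} \to 0$ $\PP_a$-almost surely (since $T_1 > 0$ a.s.), and dominated convergence with dominator $e^{-\rho H(a)} \mathcal{E}_{H(a)} \Ind_{\{H(a) < \infty\}}$, whose $\PP_a$-expectation equals $L_{a, a}(\rho) = 1$, sends the remainder to $0$. The prefactor $\PP_a(A) e^{-\rho \tau} \mathcal{E}^a_\tau$ tends to $1$ thanks to the uniform bound on the total jump rate provided by~\eqref{H8} and the continuity of $c$. Hence $L_{x, a}(\rho) \to 1$.

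For $x \uparrow a$, by Lemma~\ref{Le3}(ii) it is equivalent to show $L_{a, x}(\rho) \to 1$. The symmetric coupling now starts from $x$ and flows up to $a$ in time $\tau := \int_x^a \dd u / c(u)$; decomposing $L_{x, x}(\rho) = 1$ on $A := \{T_1 > \tau\}$ and $A^c$ gives the analogous identity
\[
  1 = \PP_x(A)\, e^{-\rho \tau}\, \mathcal{E}^x_\tau\, L_{a, x}(\rho) + R(x), \qquad R(x) := \EE_x\bigl[e^{-\rho H(x)} \mathcal{E}_{H(x)}\, \Ind_{A^c \cap \{H(x) < \infty\}}\bigr].
\]
The main technical obstacle lies here: the measure $\PP_x$ varies with $x$, so dominated convergence cannot be applied directly to $R(x)$. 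To circumvent this, I condition on the first jump time $T_1$ and destination $Z_1 = X_{T_1}$, and use Lemma~\ref{Le3}(iii) to write $L_{Z_1, x}(\rho) = h(Z_1)/h(x)$. The strong Markov property at $T_1$ then gives
\[
  R(x) = \frac{1}{h(x)}\, \EE_x\bigl[e^{-\rho T_1} \mathcal{E}_{T_1}\, h(Z_1)\, \Ind_{\{T_1 \leq \tau\}}\bigr],
\]
which is $O(\tau)$ by combining the local boundedness of $h$ near $a$ — a short consequence of Lemma~\ref{Le3}(iii) together with the direct-flow lower bound used in the previous step — with the $L^1$-bound on $\bar k$ from~\eqref{H8}. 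Since $h(x) \to h(a) > 0$, the identity yields $L_{a, x}(\rho) \to 1$, completing the proof.
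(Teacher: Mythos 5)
Your reduction to $L_{x,a}(\rho)\to 1$ via Lemma~\ref{Le3}(ii)--(iii) is correct, and the $x\downarrow a$ half of the argument is sound and is essentially the same as the paper's treatment of the corresponding limit (decompose on the no-jump event, apply dominated convergence under the fixed law $\PP_a$). The $x\uparrow a$ half, however, contains a genuine gap: the claim $R(x)=O(\tau)$ is circular.

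Two problems. First, the inner expectation in your formula for $R(x)$ involves $\int_0^z h(y)\,\bar k(z,y)\,\dd y$ for $z\in[x,a]$, and the jump destination $Z_1$ ranges over all of $(0,a)$, not over a neighbourhood of $a$; so ``local boundedness of $h$ near $a$'' does not control this integral, and boundedness of $h$ on all of $(0,a)$ is not available (indeed it need not hold). Second, and more fundamentally, the decomposition you write is a tautology. Applying the strong Markov property at $T_1$ and noting that on $\{T_1>\tau\}$ the flow reaches $a$ at time $\tau$ with $\mathcal{E}_\tau=a/x$, one gets
\[
\EE_x\bigl[\e^{-\rho T_1}\mathcal{E}_{T_1}h(Z_1)\Ind_{\{T_1\le\tau\}}\bigr]
= \EE_x\bigl[\e^{-\rho H(a)}\mathcal{E}_{H(a)}\Ind_{\{T_1\le\tau,\,H(a)<\infty\}}\bigr]\,\frac{h(a)}{L_{a,a}(\rho)}
= h(x)-\PP_x(A)\,\e^{-\rho\tau}\,\tfrac{a}{x}\,h(a),
\]
using Lemma~\ref{Le3}(iii) to convert between $h$ and $L_{\cdot,a}$. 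Substituting back, $R(x)=1-\PP_x(A)\e^{-\rho\tau}\tfrac{a}{x}L_{a,x}(\rho)$, so your identity $1=\PP_x(A)\e^{-\rho\tau}\tfrac{a}{x}L_{a,x}(\rho)+R(x)$ is $1=1$ and carries no information. In particular $R(x)\to 0$ is \emph{equivalent} to $L_{a,x}(\rho)\to 1$, which is what you are trying to prove; the concluding ``since $h(x)\to h(a)>0$'' makes the circularity explicit.

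The fix is to supply an independent one-sided bound, which is how the paper proceeds. From $R(x)\ge 0$ your decomposition does legitimately give the upper bound $L_{a,x}(\rho)\le\bigl(\PP_x(A)\e^{-\rho\tau}\tfrac{a}{x}\bigr)^{-1}\to 1$, i.e., $\limsup_{x\to a-}L_{a,x}(\rho)\le 1$. For the matching lower bound one needs Fatou's lemma under the \emph{fixed} law $\PP_a$: since $X$ has no positive jumps and follows the flow, $H(x)\to H(a)$ and $\e^{-\rho H(x)}\mathcal{E}_{H(x)}\Ind_{\{H(x)<\infty\}}\to\e^{-\rho H(a)}\mathcal{E}_{H(a)}\Ind_{\{H(a)<\infty\}}$ $\PP_a$-a.s.\ as $x\uparrow a$, whence $\liminf_{x\to a-}L_{a,x}(\rho)\ge L_{a,a}(\rho)=1$. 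Combining the two bounds gives $\lim_{x\to a-}L_{a,x}(\rho)=1$, and then Lemma~\ref{Le3}(ii) yields $L_{x,a}(\rho)\to 1$.
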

\begin{proof}
  We only need to check that $\lim_{y\to x} L_{x,y}(\rho)=1$. If this holds, then
  Lemma \ref{Le3}(iii) then entails the continuity of $z\mapsto L_{x,z}(\rho)$
  and we can
  conclude from  Lemma \ref{Le3}(ii) that $x\mapsto L_{x,y}(\rho)$ is also continuous. 

  In this direction, observe first  that $X$ has no positive jumps and follows a
  positive flow velocity
  between its jump times. Thus, $\PP_x$-a.s., on the event $H(x)<\infty$,
  there exists a unique instant $J\in(0,H(x))$ such that $X_t>x$ for $0< t < J$ and
  $X_t<x$ for $J< t < H(x)$. Further, $X$ is continuous at times $0$ and $H(x)$. 
  In particular, we have $\PP_x$-a.s. that $\lim_{y\to x+}H(y)=0$ whereas
  $\lim_{y\to x-}H(y)=H(x)$,   and actually, the following limits
  \begin{eqnarr*}
    \lim_{y\to x+} \e^{-\rho H(y)}{\mathcal E}_{H(y)}{\bf 1}_{\{H(y)<\infty\}} &=& 1, \\
    \lim_{y\to x-} \e^{-\rho H(y)}{\mathcal E}_{H(y)}{\bf 1}_{\{H(y)<\infty\}} 
    &=& \e^{-\rho H(x)}{\mathcal E}_{H(x)}{\bf 1}_{\{H(x)<\infty\}},
  \end{eqnarr*}
  hold $\PP_x$-a.s.
  We observe that the $\PP_x$-expectation of the last quantity 
  is $L_{x,x}(\rho)=1$ (by Lemma \ref{Le3}(i)), and deduce from Fatou's lemma that
  $$\liminf_{y\to x} L_{x,y}(\rho) \geq 1.$$

  On the other hand, recall that $K(x)=\int_0^x \bar k(x,y)\dd y$ is the total rate of
  jumps at location $x$. 
  An easy consequence of the fact that  $X$ follows the flow velocity given by
  $\dd x(t)=c(x(t))\dd t$
  between its jumps, is that the probability under $\PP_y$ of the event $\Lambda_{x}$ that  $X$ has no jump before hitting $x>y$ is given by
  $$\PP_y(\Lambda_{x})= \exp\biggl(-\int_y^x\frac{K(z)}{c(z)}\dd z\biggr),$$
  a quantity which converges to $1$ as $y\to x-$. 
  Moreover, the time $h(x)$ at which the flow velocity started from $y$ reaches the
  point $x$ is given by
   \[h(y,x) =    \int_y^x \frac{1}{c(s)}\,\dd s , \]
  a quantity which converges to $0$ as $y\to x-$.
  Using $L_{y,x}(\rho)\geq \e^{-\rho h(y,x)}\PP_y(\Lambda_x)$, we deduce that
  $\liminf_{y\to x-}L_{y,x}(\rho)\geq 1$, and then, thanks to Lemma \ref{Le3}(ii) that 
  $$\limsup_{y\to x-}L_{x,y}(\rho)\leq 1,$$
  from which it follows that $\lim_{y\to x-} L_{x,y}(\rho) = 1$ and, by the Lemma \ref{Le3}(iii),
  that also $\lim_{y\to x-} L_{y,x}(\rho)=1$.
  
  Finally, working now under $\PP_x$ and, just as above,
  denoting by $\Lambda_y$
  the event that
  $X$ makes no jumps before hitting $y$,
  we obtain by monotone convergence that
  $$\lim_{y\to x+} \EE_x\bigl[\e^{-\rho H(x)} {\mathcal E}_{H(x)}
    \Ind_{\Lambda_y}\, \Indic{H(x)<\infty}\bigr]
    =
    L_{x,x}(\rho) = 1.$$
  If we write $h(x,y)$ for the hitting time of $y$ by the flow velocity $x(\cdot)$ started from $x$,
  and observe that $\int_0^{h(x,y)} \underline c(x(s))\dd s=\ln(y/x)$, 
  we obtain by the Markov property at time $h(x,y)$ that 
  $$
    \EE_x\bigl[\e^{-\rho H(x)} {\mathcal E}_{H(x)}
    \Ind_{\Lambda_y} \Indic{H(x)<\infty}\bigr] =\e^{-\rho h(x,y)} \frac{y}{x} L_{y,x}(\rho).
  $$
  Since $\lim_{y\to x+} h(x,y)=0$, we conclude, using again Lemma \ref{Le3}(ii)
  for the second equality below, that 
  $$\lim_{y\to x+}  L_{y,x}(\rho) = 1 = \lim_{y\to x+}  L_{x,y}(\rho),$$
  and the proof is complete. 
\end{proof} 

Once again, we recall our standing assumption that \eqref{e:rt0} holds.
The following function will be crucial for our analysis:
$$\ell(x) = L_{x,x_0}(\rho)\,,\qquad x>0.$$ 
Note from Lemma~\ref{Le3}(iii) that, for any $y_0>0$ and $x>0$,
$L_{x,y_0}(\rho) = \ell(x) L_{x_0,y_0}(\rho)$,
and so replacing
$x_0$ by $y_0$ would only affect the function $\ell$ by a constant
factor. Further, we know from Corollary \ref{C5} that 
$\ell$ is continuous and positive on $(0,\infty)$; 
in particular, it remains bounded away from $0$ and from $\infty$ on 
compact subsets of $(0,\infty)$.


We then introduce the multiplicative functional 
$$
  {\mathcal M}_t\coloneqq \e^{-\rho t} {\mathcal E}_t  \frac{\ell(X_t)}{\ell(X_0)}\,,
  \qquad t\geq 0.
$$
The qualifier \emph{multiplicative} stems from the identity 
${\mathcal M}_{t+s}= {\mathcal M}_s\circ \theta_t \times {\mathcal M}_t $, 
where $\theta_t$ denotes the usual shift operator. 
Our strategy in the sequel shall be to make a change of measure with
respect to this multiplicative functional.
The following result is
therefore very important for our goal.

\begin{theorem}\label{Th1}
   For every $x>0$, the multiplicative functional
  $({\mathcal M}_t)_{t\geq 0}$ is a $\PP_x$-martingale
  with respect to the natural filtration $(\mathcal{F}_t)_{t\ge 0}$ of $X$.
\end{theorem}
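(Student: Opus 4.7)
The plan is to reduce the martingale property to the scalar identity
$$
  \EE_y\bigl(\e^{-\rho t}{\mathcal E}_t\ell(X_t)\bigr) = \ell(y) \quad \text{for all } y>0,\ t\ge 0.
$$
Indeed, the very definition of ${\mathcal M}$ gives the multiplicative identity ${\mathcal M}_{t+s} = {\mathcal M}_t \cdot ({\mathcal M}_s\circ\theta_t)$, and combining this with the Markov property at time $t$ yields $\EE_x[{\mathcal M}_{t+s}\mid {\mathcal F}_t] = {\mathcal M}_t\,\EE_{X_t}[{\mathcal M}_s]$; therefore the martingale property reduces to $\EE_y[{\mathcal M}_s] = 1$ for all admissible $y, s$, which is the displayed identity divided by $\ell(y)$.

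I would first establish this identity in the crucial case $y=x_0$, by a renewal argument. Under $\PP_{x_0}$, let $0=R_0<R_1<R_2<\dotsb$ denote the successive return times of $X$ to $x_0$; since $X$ has no positive jumps, is irreducible and non-explosive, $R_n\to\infty$, $\PP_{x_0}$-a.s. Thanks to assumption \eqref{e:rt0}, the measure
$$
  \mu(\dd s) \coloneqq \EE_{x_0}\bigl(\e^{-\rho H(x_0)}{\mathcal E}_{H(x_0)}, H(x_0)\in\dd s\bigr)
$$
is a probability measure on $(0,\infty)$. Iterating the strong Markov property at the $R_n$ and using the multiplicativity of ${\mathcal E}$ identifies $\EE_{x_0}(\e^{-\rho R_n}{\mathcal E}_{R_n}, R_n\in\cdot)$ as the $n$-fold convolution $\mu^{*n}$. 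Moreover, the Markov property at a deterministic time $v$ together with the identity $H(x_0) = v + H(x_0)\circ\theta_v$ on $\{H(x_0)>v\}$ gives
$$
  \EE_{x_0}\bigl(\e^{-\rho v}{\mathcal E}_v\ell(X_v), H(x_0)>v\bigr) = \EE_{x_0}\bigl(\e^{-\rho H(x_0)}{\mathcal E}_{H(x_0)}, H(x_0)>v\bigr) = \mu\bigl((v,\infty)\bigr).
$$
Decomposing $\EE_{x_0}[\e^{-\rho u}{\mathcal E}_u\ell(X_u)]$ over the partition $\{R_n\le u<R_{n+1}\}_{n\ge 0}$ and applying the strong Markov property at each $R_n$, one arrives at
$$
  \EE_{x_0}[{\mathcal M}_u] = \sum_{n=0}^{\infty}\int_{[0,u]}\mu^{*n}(\dd s)\,\mu\bigl((u-s,\infty)\bigr) = 1,
$$
the last equality being the tautology $\sum_n \PP(T_n\le u<T_{n+1}) = 1$ associated with a classical renewal process $(T_n)_{n\ge 0}$ of inter-arrival distribution $\mu$.

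For an arbitrary starting point $y>0$, I would split $\EE_y[\e^{-\rho t}{\mathcal E}_t\ell(X_t)]$ according to whether $H(x_0)>t$ or $H(x_0)\le t$. On the first event, the Markov property at time $t$, together with $H(x_0) = t + H(x_0)\circ\theta_t$, gives
$$
  \EE_y\bigl(\e^{-\rho t}{\mathcal E}_t\ell(X_t), H(x_0)>t\bigr) = \EE_y\bigl(\e^{-\rho H(x_0)}{\mathcal E}_{H(x_0)}, t<H(x_0)<\infty\bigr).
$$
On the second event, the strong Markov property at $H(x_0)$, combined with $\ell(x_0)=1$ and the identity $\EE_{x_0}[{\mathcal M}_u]=1$ just established, gives
$$
  \EE_y\bigl(\e^{-\rho t}{\mathcal E}_t\ell(X_t), H(x_0)\le t\bigr) = \EE_y\bigl(\e^{-\rho H(x_0)}{\mathcal E}_{H(x_0)}, H(x_0)\le t\bigr).
$$
Summing reproduces $\EE_y(\e^{-\rho H(x_0)}{\mathcal E}_{H(x_0)}, H(x_0)<\infty) = \ell(y)$, as desired.

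The main delicate point is the renewal identification in the middle paragraph. One must verify that the excursion decomposition genuinely exhausts $[0,\infty)$, which is the content of $R_n\to\infty$ a.s.\ (a consequence of non-explosivity of $X$ and the absence of positive jumps), and that the exchanges of sums and integrals are justified. Integrability is routine given the non-negativity of every integrand, the uniform bound ${\mathcal E}_t\le\e^{t\|\underline c\|_\infty}$, and the local boundedness of $\ell$ granted by Corollary~\ref{C5}.
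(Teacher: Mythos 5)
Your proof is correct, and it takes a genuinely different route from the paper. You reduce the martingale property to the scalar identity $\EE_y[\mathcal{M}_s]=1$ via the Markov property and the multiplicativity of $\mathcal{M}$, and then compute this marginal expectation directly by decomposing $[0,u]$ over the excursion intervals $[R_n,R_{n+1})$ and identifying a classical renewal structure: the essential point is that $\mu(\dd s)=\EE_{x_0}(\e^{-\rho H(x_0)}\mathcal{E}_{H(x_0)},\,H(x_0)\in\dd s)$ is a probability measure thanks to \eqref{e:rt0}, so $\sum_n\int_{[0,u]}\mu^{*n}(\dd s)\,\mu((u-s,\infty))=1$ is the tautological exhaustion of $[0,u]$ by a renewal process. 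The paper proceeds instead by optional stopping: it first verifies that the stopped process $(\mathcal{M}_{t\wedge R_n})_{t\geq 0}$ is a true martingale via the identity $\EE_{x_0}(\mathcal{M}_{R_n},R_n<\infty\mid\mathcal{F}_t)=\mathcal{M}_{t\wedge R_n}$, then shows that the boundary term $\EE_{x_0}(\mathcal{M}_{R_n},R_n\le t)$ vanishes as $n\to\infty$---by introducing a tilted probability $\QQ^n$ under which the increments $R_{k+1}-R_k$ are i.i.d.\ with law precisely your $\mu$---and finally lets $n\to\infty$. The underlying renewal fact is the same: your $\sum_n\PP(T_n\le u<T_{n+1})=1$ telescopes to $\lim_n\PP(T_n\le u)=0$, which is what the paper's $\QQ^n(R_n\le t)\to 0$ asserts. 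Your route is more explicit and avoids passing to a limit of stopped martingales, though the handling of general starting points $y$ requires the extra step you give at the end, whereas the paper's route handles that implicitly and also sets up the tilted measures reused in Section~\ref{s:ergodic}. One small side remark: the claim $R_n\to\infty$ $\PP_{x_0}$-a.s.\ is automatic and does not really rest on non-explosivity or the absence of positive jumps; either some $R_n=\infty$ (transient case), or the increments $R_{n+1}-R_n$ are i.i.d.\ and a.s.\ strictly positive, which already forces divergence.
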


\begin{proof}
  Without loss of generality, we shall  work under $\PP_{x_0}$. 
  We also define the random variables
  $R_0=0<R_1\coloneqq H(x_0)<R_2< \cdots$ to be the sequence 
  of return times to the point $x_0$, and recall from the regenerative property
  at these return times that  for every $n\geq 0$, conditionally on 
  $R_n<\infty$, the ratio  
  $$\frac{ \e^{-\rho R_{n+1}}{\mathcal E}_{R_{n+1}} }{ \e^{-\rho R_n} {\mathcal E}_{R_n}}= \exp\left(\int_{R_n}^{R_{n+1}} (\underline c(X_s)-\rho)\dd s\right) $$ 
is independent of ${\mathcal F}_{R_n}$ and has the same law as ${\mathcal E}_{H(x_0)} \e^{-\rho H(x_0)}$ under $\PP_{x_0}$.
We see from  \eqref{e:rt0} that $\EE_{x_0} \left( {\mathcal E}_{R_n} \e^{-\rho R_n}, R_n<\infty\right) =1$  for every $n\geq 0$, and it then 
 follows  from the Markov property  that there is the identity
\begin{eqnarray*}
\EE_{x_0} \left( {\mathcal M}_{R_n} , R_n<\infty \mid {\mathcal F}_t\right)&= &\EE_{x_0} \left(  \e^{-\rho R_n} {\mathcal E}_{R_n}, R_n<\infty \mid {\mathcal F}_t\right)\\
& =& \e^{-\rho (t\wedge R_n)} {\mathcal E}_{t\wedge R_n}\ell(X_{t\wedge R_n}) \\
&=& {\mathcal M}_{t\wedge R_n}.
\end{eqnarray*}
As a consequence, the stopped process $({\mathcal M}_{t\wedge R_n})_{t\geq 0}$ is a martingale.

Further, if we introduce the tilted probability measure 
$$\QQ^n=  {\bf 1}_{R_n<\infty}  \e^{-\rho R_n}{\mathcal E}_{R_n} \PP_{x_0} =  {\bf 1}_{R_n<\infty}{\mathcal M}_{R_n} \PP_{x_0},$$ then we see by the regeneration property at the return times and the fact that ${\mathcal M}$ is a multiplicative functional, that under
$\QQ^n$, the variables $R_1, R_2-R_1, \ldots, R_n-R_{n-1}$ are i.i.d. with law
$$\QQ^n(H(x_0)\in \dd s)= \PP_{x_0} (\e^{-\rho H(x_0)} {\mathcal E}_{H(x_0)} , H(x_0)\in \dd s)\,,\qquad s\in(0,\infty).$$
We stress that this distribution does not depend on $n$, and in particular, for every $t>0$, we have
$$ \EE_{x_0} \left( {\mathcal M}_{R_n} , R_n\leq t\right)  = \QQ^n(R_n\leq t) \longrightarrow 0 \hbox{ as }n\to \infty.$$
To complete the proof, it now suffices to write for every $t\geq s \geq 0$
\begin{eqnarray*}
{\mathcal M}_{s\wedge R_n}&=& \EE_{x_0}({\mathcal M}_{t\wedge R_n}\mid {\mathcal F}_s)\\
&=& \EE_{x_0}({\mathcal M}_{t}, R_n>t\mid {\mathcal F}_s)+  \EE_{x_0}({\mathcal M}_{R_n}, R_n\leq t \mid {\mathcal F}_s),
\end{eqnarray*}
and we conclude by letting $n\to \infty$ that ${\mathcal M}_s=\EE_{x_0}({\mathcal M}_t\mid {\mathcal F}_s)$. 
 \end{proof}
We point out that the continuity of $\ell$ (which is a special case of Corollary \ref{C5}) could also be established
from Theorem~\ref{Th1} and classical regularity properties of martingales.
We conclude this section by the following  easy consequence of Theorem~\ref{Th1}.
Under rather mild assumptions, we identify  
the function $\bar \ell(x)=x\ell(x)$ as an eigenfunction of
the growth-fragmentation operator ${\mathcal A}$, with eigenvalue given by the spectral radius $\rho$.

\begin{corollary}\label{Co1} \begin{enumerate}
\item The function $\ell$ belongs to the extended domain of the infinitesimal generator ${\mathcal G}$ of $X$ with ${\mathcal G}\ell=(\rho-\underline c)\ell$,  in the sense that the process
\begin{equation}\label{e:h-martingale}
  \ell(X_t) -\int_0^t \left(\rho - \underline c(X_s) \right) \ell(X_s)\dd s
\end{equation}
is a  martingale under $\PP_x$ for every $x>0$.

\item If $\ell$ is bounded on $(0,\infty)$, then 
$\bar \ell \in {\mathcal D}({\mathcal A})$ and ${\mathcal A}\bar \ell=\rho \bar \ell$. 
\end{enumerate}
\end{corollary}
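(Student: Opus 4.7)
Both parts rest on Theorem~\ref{Th1}: the process $M_t \coloneqq \e^{-\rho t} \mathcal{E}_t \ell(X_t) = \ell(X_0) \mathcal{M}_t$ is a $\PP_x$-martingale. I would start with part (ii), which is simpler, and bypass the extended-domain machinery by working directly with the semigroup $\bar T_t$. Lemma~\ref{Le1} (more precisely, the intermediate identity $\bar T_t f(x) = \EE_x(\mathcal{E}_t f(X_t))$ used in its proof) combined with the martingale identity $\EE_x(M_t) = \ell(x)$ gives
\begin{equation*}
  \bar T_t \ell(x) = \e^{\rho t} \ell(x),\qquad x>0,\ t\ge 0,
\end{equation*}
so $\ell$ is a pointwise eigenfunction of $\bar T_t$ with eigenvalue $\e^{\rho t}$. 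If $\ell$ is bounded, then $\ell \in \mathcal{C}_b$ and $(\bar T_t \ell - \ell)/t = ((\e^{\rho t}-1)/t)\ell$ converges in supremum norm to $\rho\ell$ as $t \downarrow 0$. This places $\ell$ in the strong domain $\mathcal{D}(\bar{\mathcal A})$ with $\bar{\mathcal A}\ell = \rho\ell$, and Lemma~\ref{L0}(ii) then delivers $\bar\ell \in \mathcal{D}(\mathcal{A})$ with $\mathcal{A}\bar\ell = \rho\bar\ell$.

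For part (i) the central idea is integration by parts applied to the decomposition $\ell(X_t) = A_t M_t$, where $A_t \coloneqq \e^{\rho t}/\mathcal{E}_t$ is absolutely continuous with $\dd A_t = A_t(\rho - \underline c(X_t))\dd t$. Because $A$ is continuous of finite variation, no covariation term appears and the integration-by-parts formula gives
\begin{equation*}
  \ell(X_t) - \ell(X_0) = \int_0^t A_s\,\dd M_s + \int_0^t (\rho - \underline c(X_s))\ell(X_s)\,\dd s.
\end{equation*}
Rearranging exhibits the candidate process $N_t \coloneqq \ell(X_t) - \int_0^t (\rho - \underline c(X_s))\ell(X_s)\,\dd s$ as $\ell(X_0) + \int_0^t A_s\,\dd M_s$. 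As a stochastic integral of a continuous, locally bounded integrand against a martingale, it is automatically a local martingale.

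The main technical obstacle is then upgrading this local martingale to a true martingale, since $\ell$ is not assumed bounded in (i). My plan is to localize by the exit times $\tau_n \coloneqq \inf\{t : X_t \notin [1/n,n]\}$, which rise to infinity a.s.\ because $X$ can reach neither $0$ nor $\infty$ in finite time; on each $[0,\tau_n]$ the integrands are bounded and $N^{\tau_n}$ is a bounded, hence true, martingale. To pass $n\to\infty$ I would use the a priori bound $\EE_x(\ell(X_t)) \le \e^{\rho t}\ell(x)$, which is immediate from $\mathcal{E}_t \ge 1$ together with the eigenvalue identity $\EE_x(\mathcal{E}_t \ell(X_t)) = \e^{\rho t}\ell(x)$ already established in part (ii). Fubini then yields integrability of the integral term in $N_t$, and a dominated-convergence argument delivers uniform integrability of $(N_{t\wedge\tau_n})_n$ on bounded time intervals, allowing passage of $\EE_x(N^{\tau_n}_t \mid \mathcal{F}_s) = N^{\tau_n}_s$ to the limit $n\to\infty$. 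Everything outside this uniform-integrability step is essentially formal computation.
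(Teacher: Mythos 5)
Your proposal is correct, and it diverges from the paper's proof in interesting ways on both parts.

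For part (i), your decomposition $\ell(X_t)=A_t M_t$ with $A_t=\e^{\rho t}/\mathcal E_t$ and the stochastic integration by parts are exactly what the paper does (the displayed integrand in the paper reads $\e^{\rho s}\mathcal E_s$, but it should be $\e^{\rho s}/\mathcal E_s$ as you have it; both are locally bounded so the rest is unaffected). Where you diverge is the local-to-true upgrade: the paper invokes Protter's Theorem~I.51 directly from the boundedness of the integrand, while you localise at $\tau_n$ and pass to the limit. Your plan is fine, but the phrase ``a dominated-convergence argument delivers uniform integrability'' hides the one nontrivial point, since $\ell(X_{t\wedge\tau_n})$ is not dominated by a fixed integrable random variable. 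The clean way to close it: since $\mathcal E\ge 1$, one has $\ell(X_{t\wedge\tau_n})\le \e^{|\rho|t}\,\ell(x)\,\mathcal M_{t\wedge\tau_n}$, and by optional sampling for the \emph{true} martingale $\mathcal M$ (Theorem~\ref{Th1}) at the bounded stopping time $t\wedge\tau_n$ we have $\mathcal M_{t\wedge\tau_n}=\EE_x[\mathcal M_t\mid\mathcal F_{t\wedge\tau_n}]$; conditional expectations of a fixed integrable variable form a uniformly integrable family, and domination by a UI family gives UI. Combined with your Fubini bound on the integral term, this yields $L^1$ convergence of $N_{t\wedge\tau_n}$ to $N_t$, and the martingale property passes through. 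With that insertion, part (i) is complete.

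For part (ii), your route is genuinely different from the paper's and, I think, cleaner. The paper derives (ii) from (i) by taking expectations in \eqref{e:h-martingale} and then appealing to the Feller property to place $\ell$ in the strong domain. You instead bypass (i) entirely: once $\ell\in\mathcal C_b$, the Feynman--Kac representation $\bar T_t f(x)=\EE_x(\mathcal E_t f(X_t))$ applies, the martingale identity gives $\bar T_t\ell=\e^{\rho t}\ell$ exactly, and then $(\bar T_t\ell-\ell)/t=((\e^{\rho t}-1)/t)\ell\to\rho\ell$ in supremum norm, which is precisely the definition of $\ell\in\mathcal D(\bar{\mathcal A})$ with $\bar{\mathcal A}\ell=\rho\ell$. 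This is a direct verification that requires no appeal to the extended-domain martingale characterisation, and Lemma~\ref{L0}(ii) finishes the transfer to $\mathcal A$ and $\bar\ell$.
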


\begin{proof}[Proof of \autoref{Co1}] (i)
  Indeed, it suffices to write 
  $$
    \ell(X_t)
    = 
    \ell(x) {\mathcal M}_t 
    \e^{\rho t} \exp\left( - \int_0^t \underline c (X_s)\dd s\right)
  $$
 and apply stochastic integration by parts.
 We obtain
 $$  \ell(X_t) =\ell(x) + \ell(x) \int_0^t \e^{\rho s} {\mathcal E}_s \dd{\mathcal M}_s + \int_0^t \left(\rho - \underline c(X_s) \right) \ell(X_s)\dd s.$$
 On the time interval $[0,t]$, the integrand $\e^{\rho s} {\mathcal E}_s$ in the stochastic integral is bounded by a constant, and this entails that the process in \eqref{e:h-martingale}
 is a martingale, by \cite[Theorem~I.51]{Pro-sc}.
 
 (ii)  Recall that we already know that $\ell$ is continuous, so if further $\ell$ is bounded, then $\ell\in{\mathcal C}_b$. Then also  $(\rho-\underline c)\ell \in{\mathcal C}_b$,
 and, by taking expectations in 
 \eqref{e:h-martingale} and using the Feller property of $X$,
 (i) entails that $\ell$ belongs to the domain of the infinitesimal generator ${\mathcal G}$, that is $\ell \in {\mathcal D}(\bar {\mathcal A})$ or equivalently 
 $\bar \ell \in {\mathcal D}( {\mathcal A})$,  with
 ${\mathcal G}\ell=(\rho-\underline c)\ell$. Since ${\mathcal G}f(x)=x^{-1}{\mathcal A}\bar f(x)-\underline c(x) f(x)$, we conclude that ${\mathcal A}(\bar \ell)= \rho \bar \ell$. 
 \end{proof}

In order to apply Corollary \ref{Co1}(ii), we need explicit conditions ensuring that $\ell$ is bounded,
and in this direction we record the following result.

\begin{lemma} \label{Le:lb} Assume  that
 $$\limsup_{x\to 0+}\underline c(x)<\rho\quad \hbox{and} \quad \limsup_{x\to \infty}\underline c(x)<\rho.$$
  Then $\ell \in{\mathcal C}_b$. 
\end{lemma}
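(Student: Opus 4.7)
By Corollary~\ref{C5}, $\ell$ is continuous and positive on $(0,\infty)$, hence bounded on every compact subset; the task is therefore to bound $\ell$ near $0$ and near $\infty$. The strategy is to apply the strong Markov property at a carefully chosen level-crossing time in each regime, exploiting that $X$ has no positive jumps together with the bound $\underline c\leq\rho-\varepsilon$ provided by the hypotheses outside a compact set.

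From the two \emph{limsup} assumptions, I fix $\varepsilon>0$ and $0<a<b<\infty$ with $x_0\in(a,b)$ and $\underline c(y)\leq\rho-\varepsilon$ for all $y\in(0,a]\cup[b,\infty)$. For $y<a$: since the jumps of $X$ are downward and the deterministic flow $\dd x(t)=c(x(t))\dd t$ is strictly positive, every trajectory starting from $y$ remains in $(0,a)$ until the first instant it crosses level $a$ continuously from below. In particular, $\PP_y$-almost surely, $X_s<a$ for all $s<H(a)$, and $H(a)\leq H(x_0)$ on $\{H(x_0)<\infty\}$. Therefore
$$\e^{-\rho H(a)}\mathcal{E}_{H(a)}=\exp\biggl(\int_0^{H(a)}\bigl(\underline c(X_s)-\rho\bigr)\,\dd s\biggr)\leq \e^{-\varepsilon H(a)}\leq 1$$
on $\{H(a)<\infty\}$, giving $L_{y,a}(\rho)\leq 1$, and Lemma~\ref{Le3}(iii) then yields
$$\ell(y)=L_{y,a}(\rho)\,L_{a,x_0}(\rho)\leq\ell(a).$$

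For $x>b$: the same no-positive-jumps property forces any trajectory starting from $x$ and bound for $x_0<b$ to first enter $(0,b]$. Setting $\tilde T\coloneqq\inf\{t\geq 0:X_t\leq b\}$, one has $\tilde T\leq H(x_0)$ on $\{H(x_0)<\infty\}$ and $X_s>b$ for $s<\tilde T$, whence again $\e^{-\rho\tilde T}\mathcal{E}_{\tilde T}\leq 1$. Applying the strong Markov property at $\tilde T$,
$$\ell(x)=\EE_x\bigl[\e^{-\rho\tilde T}\mathcal{E}_{\tilde T}\,\ell(X_{\tilde T});\,\tilde T<\infty\bigr]\leq\sup_{(0,b]}\ell,$$
which is finite by continuity of $\ell$ on the compact interval $[a,b]$ together with the bound $\ell\leq\ell(a)$ on $(0,a]$ from the previous step.

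I do not anticipate any serious obstacle: the argument is essentially pathwise, and the only point that really requires care is the sample-path claim that $X_s<a$ for $s<H(a)$ when $X_0<a$ (and its analogue at level $b$), which is immediate from the no-positive-jumps property of the piecewise-deterministic process $X$ discussed in Section~\ref{s:fk}.
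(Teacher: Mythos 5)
Your proof is correct and essentially the same as the paper's: pick $[a,b]$ outside of which $\underline c\le\rho-\varepsilon$, note $\ell$ is continuous hence bounded on $[a,b]$, use $L_{y,a}(\rho)\le 1$ plus Lemma~\ref{Le3}(iii) below $a$, and apply the strong Markov property at a first-entrance time above $b$. The only cosmetic difference is that you stop at the first entrance into $(0,b]$ (so you also need the already-established bound on $(0,a]$), whereas the paper stops at the first entrance into $[a,b]$ and bounds $\ell$ directly by $\max_{[a,b]}\ell$; both are valid.
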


\begin{proof}
Under the assumptions of the statement, there exists $\rho'<\rho$ such that 
the set
$\{x>0: \underline c(x)\geq \rho'\}$ is a compact subset of $(0,\infty)$;
assume that it is contained in $[a,b]$, for some $0<a<x_0<b$.
Now, since $\ell$ is continuous, it is certainly bounded on $[a,b]$.
Moreover, if $0<x<a$, then $\e^{-\rho H(a)}{\mathcal E}_{H(a)} \leq \e^{-(\rho -\rho')H(a)} \leq 1$. So $L_{x,a}(\rho)\leq 1$, and by Lemma \ref{Le3}(iii), $\ell$ remains bounded on $(0,a)$. 

Similarly, if now $x>b$ and  $H(a,b)\coloneqq \inf\{t>0: X_t\in[a,b]\}$ denotes the first entrance time in $[a,b]$,  then again
 $\e^{-\rho H(a,b)}{\mathcal E}_{H(a,b)} \leq \e^{-(\rho -\rho')H(a)} \leq 1$. By the strong Markov property applied at time $H(a,b)$, we conclude that 
 $\ell(x)\leq  \max_{[a,b]}\ell$, so $\ell$ remains bounded on $(b,\infty)$. 
 \end{proof}

\section{Applying ergodic theory for Markov processes} 
\label{s:ergodic}

We still assume that \eqref{e:rt0} holds throughout this section.
Having established the existence of the martingale multiplicative
functional ${\mathcal M}$,
we use this to `tilt' the  initial probability measure $\PP_x$.
In other words, we introduce a new probability measure $\QQ_x$,
defined by the following formula for every $A\in \mathcal{F}_t$:
\[ \QQ_x(A) = \EE_x[ \Ind_A \mathcal{M}_t] .\]
Since $\PP_x$ is a probability law on the space of c\`adl\`ag paths,
the same holds for $\QQ_x$; and it is convenient to denote
by $Y=(Y_t)_{t\geq 0}$ a process with distribution $\QQ_x$. 
For clarity, let us point out that its finite-dimensional
distributions are given as follows. Let $0\le t_1<\dotsb<t_n\le t$,
and $F \from \RR^n\to\RR$. Then
\[ \QQ_x[ F(Y_{t_1},\dotsc,Y_{t_n})]
  = \EE_x[ \mathcal{M}_t F(X_{t_1},\dotsc,X_{t_n})], \qquad x > 0.
\]
(Note that, whenever it will not cause confusion, we will
use $\QQ_x$ not just for the probability measure, but also
for expectations under this measure.)
In fact, $Y$ is not just a stochastic process, but a Markov process,
and we can specify its distribution in detail, as follows.

\begin{lemma}\label{Le4} Let $x>0$.
  \begin{enumerate}
    \item\label{Le4:mp}
    Under the measure $\QQ_x$, $Y = (Y_t)_{t\ge 0}$ is a strong Markov
    process. The domain of its extended infinitesimal generator ${\mathcal G}_Y$
    contains  ${\mathcal D}_{\ell}({\mathcal G})\coloneqq \{g: g\ell \in {\mathcal D}({\mathcal G})\}$,
    and is given by 
    \begin{equation} \label{e:gY}
      {\mathcal G}_Yg(x) 
      = \frac{1}{\ell(x)} {\mathcal G}(g\ell)(x) +(\underline c(x)- \rho) g(x) 
    \end{equation}
    in the sense that, for every $x>0$ and $g\in {\mathcal D}_{\ell}({\mathcal G})$, 
    \begin{equation} \label{e:qmart}
      g(Y_t)-\int_0^t {\mathcal G_Y}g(Y_s) \, \dd s
      \quad \hbox{is a local martingale under $\QQ_x$.}
    \end{equation}
    Its semigroup 
    $(T^Y_t)_{t\geq 0}$, defined on the Banach space 
    \[ {\mathcal C}^{\ell}_b
    \coloneqq \{g: (0,\infty)\to (0,\infty): g\ell \in{\mathcal C}_b\} \]
    with norm $\|g\|=\|g\ell\|_{\infty}$, is
    given by 
    \[ T^Y_tg(x) \coloneqq \QQ_x[ g(Y_t) ]
      =\EE_x({\mathcal M}_t g(X_t))
      = \frac{1}{\ell(x)}
      \EE_x\left( \e^{-\rho t}{\mathcal E}_t \ell(X_t) g(X_t)\right).
    \]
    \item\label{Le4:rec}
    $Y$ is point recurrent.
  \end{enumerate}
\end{lemma}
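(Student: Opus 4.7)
The plan is to treat the three claims in part \ref{Le4:mp} together, then derive \ref{Le4:rec} from a short hitting-time computation. All three rest on the same idea: because $\mathcal{M}$ is a positive $\PP_x$-martingale with $\mathcal{M}_0=1$ and multiplicative (Theorem \ref{Th1}), the measure $\QQ_x$ defined on each $\mathcal{F}_t$ inherits a Markov structure, and $\mathcal{M}$-weighted hitting probabilities of $X$ become ordinary hitting probabilities of $Y$.

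For the semigroup formula, I would simply substitute $\mathcal{M}_t = \e^{-\rho t}\mathcal{E}_t \ell(X_t)/\ell(x)$ into $T^Y_t g(x)=\QQ_x[g(Y_t)]=\EE_x[\mathcal{M}_t g(X_t)]$. For the strong Markov property, the multiplicative identity $\mathcal{M}_{\tau+s}=\mathcal{M}_\tau\cdot(\mathcal{M}_s\circ\theta_\tau)$ combined with the strong Markov property of $X$ gives, for any bounded stopping time $\tau$, any $A\in\mathcal{F}_\tau$ and any bounded $f$,
$$\QQ_x\bigl[\Ind_A f(Y_{\tau+s})\bigr]=\EE_x\bigl[\Ind_A\mathcal{M}_\tau\EE_{X_\tau}[\mathcal{M}_s f(X_s)]\bigr]=\QQ_x\bigl[\Ind_A T^Y_s f(Y_\tau)\bigr],$$
and the extension to general (not necessarily bounded) stopping times is then standard. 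To identify the generator, fix $g\in\mathcal{D}_\ell(\mathcal{G})$ so that $(g\ell)(X_t)-\int_0^t\mathcal{G}(g\ell)(X_s)\,\dd s$ is a $\PP_x$-local martingale. Replaying the integration-by-parts argument from the proof of Lemma \ref{Le1}, but with the finite-variation process $\e^{-\rho t}\mathcal{E}_t$ (whose derivative is $(\underline c(X_t)-\rho)\e^{-\rho t}\mathcal{E}_t$) in place of $\mathcal{E}_t$, yields that $\mathcal{M}_t g(X_t)-\int_0^t\mathcal{M}_s\mathcal{G}_Yg(X_s)\,\dd s$ is a $\PP_x$-local martingale with $\mathcal{G}_Y$ as in \eqref{e:gY}. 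A second integration by parts, noting that $\mathcal{M}_t\int_0^t\mathcal{G}_Yg(X_s)\,\dd s-\int_0^t\mathcal{M}_s\mathcal{G}_Yg(X_s)\,\dd s$ is a stochastic integral against $\mathcal{M}$ and hence a local martingale, shows that $\mathcal{M}_t\bigl[g(X_t)-\int_0^t\mathcal{G}_Yg(X_s)\,\dd s\bigr]$ is a $\PP_x$-local martingale, which is exactly the standard change-of-measure criterion for \eqref{e:qmart}.

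For part \ref{Le4:rec}, fix $x,y>0$ and apply optional stopping to the positive martingale $\mathcal{M}$ at the bounded stopping time $H(y)\wedge t$ to obtain
$$\QQ_x(H(y)\leq t)=\EE_x\bigl[\Ind_{\{H(y)\leq t\}}\mathcal{M}_{H(y)}\bigr]=\frac{\ell(y)}{\ell(x)}\EE_x\bigl[\e^{-\rho H(y)}\mathcal{E}_{H(y)},H(y)\leq t\bigr].$$
Monotone convergence as $t\to\infty$ gives $\QQ_x(H(y)<\infty)=\ell(y)L_{x,y}(\rho)/\ell(x)$, and Lemma \ref{Le3}(ii)--(iii) show that $L_{x,y}(\rho)=L_{x,x_0}(\rho)/L_{y,x_0}(\rho)=\ell(x)/\ell(y)$, so $\QQ_x(H(y)<\infty)=1$. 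Applied at $y=x$ and iterated via the strong Markov property of $Y$ already established, this delivers that $Y$ visits every point $y$ infinitely often $\QQ_x$-a.s.

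The main obstacle I anticipate is the stochastic-calculus bookkeeping in the generator identification: one must track that the two integration-by-parts manipulations really produce \emph{local} martingales under minimal integrability on $g$, and in particular be honest that the semigroup statement $T^Y_t g(x)=\QQ_x[g(Y_t)]$ is an exact equality for $g\in\mathcal{C}^\ell_b$ (use the uniform bound $\mathcal{E}_t\leq\e^{t\|\underline c\|_\infty}$) but requires a localization argument when we only have $g\in\mathcal{D}_\ell(\mathcal{G})$. The hitting-time computation in part \ref{Le4:rec} is conceptually clean but depends essentially on the exact identity $L_{x,x_0}(\rho)L_{x_0,y}(\rho)=L_{x,y}(\rho)$, so any slip in Lemma \ref{Le3} would propagate here.
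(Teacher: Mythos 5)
Your proposal is correct and follows essentially the same route as the paper: integration by parts with the finite-variation factor $\e^{-\rho t}\mathcal{E}_t$, then a second integration by parts against the $\PP_x$-martingale $\mathcal{M}$ to turn the local-martingale statement into the change-of-measure criterion for \eqref{e:qmart}, and optional sampling of $\mathcal{M}$ at $H(x)\wedge t$ for recurrence. The only deviations are presentational: the paper cites references (Rogers--Williams and Sharpe) for the preservation of the strong Markov property under multiplicative-functional tilting, whereas you sketch the computation directly from $\mathcal{M}_{\tau+s}=\mathcal{M}_\tau\cdot(\mathcal{M}_s\circ\theta_\tau)$; and for part (ii) you compute $\QQ_x(H(y)<\infty)=\ell(y)L_{x,y}(\rho)/\ell(x)$ for arbitrary $y$ and then invoke Lemma~\ref{Le3}(ii)--(iii), whereas the paper specializes directly to $y=x$ where $\mathcal{M}_{H(x)}=\e^{-\rho H(x)}\mathcal{E}_{H(x)}$ and \eqref{e:rt0} finishes immediately. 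Both are valid; your variant is a touch more work but establishes the stronger fact $\QQ_x(H_Y(y)<\infty)=1$ for every $y$, which the paper gets as a by-product of irreducibility and recurrence.
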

\begin{proof}
  \ref{Le4:mp}
  It is well-known that transformations based on 
  multiplicative functionals preserve the (strong) Markov property;
  we refer to \cite[\S III.19]{RW1} for a readable account
  of a slightly simpler case,
  or \cite[\S 62]{Sha-mp} for a technical discussion.
  We can thus view $\QQ_x$ as the law of a Markov process 
  $(Y_t)_{t\geq 0}$ with values in $(0,\infty)$,
  whose semigroup is given by $T_t^Y$.
  
  We now prove \eqref{e:qmart} for every $x>0$.
  Indeed, we know that  $f(X_t)-\int_0^t {\mathcal G} f(X_s)\dd s$ 
  is a $\PP_x$-martingale, so by stochastic calculus,
  $$
    \e^{-\rho t} {\mathcal E}_t f(X_t)
    - \int_0^t \e^{-\rho s} {\mathcal E}_s
    \left( {\mathcal G}f(X_s) +(\underline c(X_s)-\rho)f(X_s)\right) \dd s
  $$
  is  a $\PP_x$-local martingale. Multiplying by $\ell(x)$, this shows that
  $$
    {\mathcal M}_t g(X_t) -\int_0^t \frac{ {\mathcal M}_s }{\ell(X_s)}
    \left( {\mathcal G}f(X_s) +(\underline c(X_s)-\rho)f(X_s)\right) \dd s
  $$
  is a $\PP_x$-local martingale. 
  Further, since ${\mathcal M}$ is a $\PP_x$-martingale,
  stochastic integration by parts shows that for every locally bounded function $h$, 
  $$
    {\mathcal M}_t \int_0^t h(X_s)\dd s -\int_0^t  {\mathcal M}_s h(X_s) \dd s
  $$
  is again $\PP_x$-local martingale. Putting the pieces together, we get that
  $$
    {\mathcal M}_t \left(g(X_t) 
    - \int_0^t
    \frac{ {\mathcal G}f(X_s) + (\underline c(X_s)-\rho)f(X_s)}
    {\ell(X_s)} \dd s \right)
  $$
  is a $\PP_x$-local martingale, that is, equivalently, \eqref{e:qmart} holds. 

  \ref{Le4:rec}
  Write $H_Y(x)=\inf\{t>0: Y_t=x\}$ for first hitting time of $x$ by the  process $Y$. Then:
  \begin{eqnarr*}
    \QQ_x(H_Y(x)<\infty)&=& \lim_{t\to \infty} \QQ_x(H_Y(x)\leq t)\\
    &=&  \lim_{t\to \infty} \EE_x\left( {\mathcal M}_t, H(x)\leq t\right)\\
    &=& \lim_{t\to \infty} \EE_x\left( {\mathcal M}_{H(x)}, H(x)\leq t\right) \\
    &=& \EE_x\left( {\mathcal M}_{H(x)}, H(x)< \infty\right) =1,
  \end{eqnarr*}
  where at the third equality, we used the optional sampling theorem
  \cite[Theorem II.77.5]{RW1} for the martingale ${\mathcal M}$.
\end{proof}

We next specify classical formulas for invariant measures and stationary distributions of point-recurrent Markov processes, in the case of the process $Y$.

\begin{corollary}\label{Co2}
  \begin{enumerate}
    \item The occupation measure $m_0$ of the excursion of $Y$ away from $x_0$ defined by 
    $$\ip{m_0}{f} \coloneqq \QQ_{x_0}\left(\int_0^{H_Y(x_0)} f(Y_s)\dd s\right),
      \qquad f\in{\mathcal C}_c,$$
    where $H_Y(x)=\inf\{t>0: Y_t=x\}$ denotes the first hitting time of 
    $x$ by the  process $Y$, 
    is  the unique (up on a constant factor) invariant measure for $Y$.
    Further $m_0$  is absolutely continuous with respect to the Lebesgue measure,
    with a locally integrable and  everywhere positive density given by
    $$\frac{q(x_0,y)}{c(y) q(y,x_0)} \,, \qquad y>0,$$
    where $q(x,y)\coloneqq \QQ_x(H_Y(y)<H_Y(x))$. 

    \item $(Y_t)_{t\geq 0}$ is positive recurrent if and only if
    the function $L_{x,x}$ has a finite right-derivative at $\rho$,
    that is,
    \begin{equation}\label{e:+R}
      -L'_{x,x}(\rho) =\EE_{x}\left( H(x) \e^{-\rho H(x)} {\mathcal E}_{H(x)} , H(x)<\infty\right)<\infty
    \end{equation}
    for some (and then all) $x>0$. In that case, its  stationary law, 
    that is $m_0$ normalized to be a probability measure, has the density
    $$\frac{1}{c(y) |L'_{y,y}(\rho)|},\qquad y>0. $$
  \end{enumerate}
\end{corollary}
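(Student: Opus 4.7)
Since $Y$ is point recurrent by Lemma~\ref{Le4}(ii), I would first invoke a standard result for recurrent Markov processes (cf.\ \citet{Num-mc}) which guarantees that the excursion occupation measure from $x_0$ is invariant and unique up to a multiplicative constant. To identify the density, I would exploit that $Y$, like $X$, is piecewise-deterministic: between its (downward-only) jumps it follows the upward flow $\dd x(t)=c(x(t))\dd t$. Each passage of $Y$ through $y$ therefore contributes $\dd y/c(y)$ of time in an infinitesimal neighbourhood of $y$, whence $m_0(\dd y) = \QQ_{x_0}[N(y)] \, \dd y/c(y)$, where $N(y)$ is the number of visits of $Y$ to $y$ during the excursion from $x_0$. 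An application of the strong Markov property at successive visits to $y$ shows that $N(y)$, conditional on $\{N(y)\ge 1\}$, is geometric with success probability $q(y,x_0)$, while $\QQ_{x_0}(N(y)\ge 1) = q(x_0,y)$; hence $\QQ_{x_0}[N(y)] = q(x_0,y)/q(y,x_0)$. Local integrability of the resulting density will follow from continuity of $q$ and $c$, and positivity from the irreducibility of $Y$.

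\textbf{Part (ii), criterion for positive recurrence.} I would compute the total mass $m_0((0,\infty)) = \QQ_{x_0}[H_Y(x_0)]$ via the change of measure. Optional stopping of the $\PP_{x_0}$-martingale $\mathcal{M}$ at $H(x_0)\wedge n$, followed by monotone convergence as $n\to\infty$, yields
\begin{equation*}
  \QQ_{x_0}[H_Y(x_0)] = \EE_{x_0}\bigl[H(x_0)\,\mathcal{M}_{H(x_0)};\, H(x_0)<\infty\bigr].
\end{equation*}
Since $X_{H(x_0)}=x_0$, the factor $\ell(X_{H(x_0)})/\ell(x_0)$ in $\mathcal{M}_{H(x_0)}$ equals $1$, so this reduces to $\EE_{x_0}[H(x_0)\e^{-\rho H(x_0)}\mathcal{E}_{H(x_0)};\, H(x_0)<\infty]$. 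Monotone convergence applied to the difference quotient $h^{-1}(L_{x_0,x_0}(\rho+h)-L_{x_0,x_0}(\rho))$ as $h\downarrow 0$ then identifies this expectation with $-L'_{x_0,x_0}(\rho)\in(0,\infty]$. Thus $Y$ is positive recurrent if and only if $-L'_{x_0,x_0}(\rho)<\infty$, and independence of the base point follows by rerunning the argument at any other $x>0$.

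\textbf{Stationary density and main obstacle.} For the intrinsic formula $1/(c(y)|L'_{y,y}(\rho)|)$, I would apply part (i) with the reference point $y$ in place of $x_0$: the invariant measure $m_y(\dd z)=q(y,z)\dd z/(c(z)q(z,y))$ has total mass $-L'_{y,y}(\rho)$, so by uniqueness $\pi = m_y/(-L'_{y,y}(\rho))$. It then suffices to verify that the $m_y$-density at the starting point $y$ equals $1/c(y)$. Under $\QQ_y$, the initial segment of the excursion follows the flow out of $y$ at speed $c(y)$, and so spends time $\epsilon/c(y)+o(\epsilon)$ in $[y,y+\epsilon]$; every later sojourn of the excursion in this shrinking interval must be initiated by a downward jump landing there, an event of probability $O(\epsilon)$, after which the flow exits the interval in a further $O(\epsilon)$ of time, for a total $O(\epsilon^2)$ contribution. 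Dividing by $\epsilon$ and letting $\epsilon\downarrow 0$ will deliver the density $1/c(y)$. The hard part will be making the $O(\epsilon^2)$ control of the jump-induced contribution to $m_y([y,y+\epsilon])$ rigorous; this requires local boundedness of the tilted jump intensity $\bar k(z,\cdot)\ell(\cdot)/\ell(z)$, which should follow from assumption~\eqref{H8} together with the continuity of $\ell$ established in Corollary~\ref{C5}.
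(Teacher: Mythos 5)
Your proof is correct and follows essentially the same route as the paper's: excursion occupation measure gives the invariant measure, expected visits and flow speed give the density $q(x_0,y)/(c(y)q(y,x_0))$, total mass of $m_0$ equals the expected return time which is identified with $-L'_{x_0,x_0}(\rho)$, and uniqueness of the invariant measure combined with a reference-point exchange gives the intrinsic formula for the stationary density. The only genuine variations are cosmetic: you compute $\QQ_{x_0}[H_Y(x_0)]$ by optional stopping at $H(x_0)\wedge n$ and monotone convergence where the paper integrates the survival function $1-\QQ_{x_0}(H_Y(x_0)\le t)$, and for the final step you propose a direct time-spent estimate in $[y,y+\epsilon]$ where the paper instead reduces the matter to showing that $z\mapsto q(y,z)$ and $z\mapsto q(z,y)$ tend to $1$ as $z\to y$ (by an argument parallel to Corollary~\ref{C5}); the latter is cleaner and is the route you may find easier to make rigorous than the $O(\epsilon^2)$ bound.
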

We recall that Lemma \ref{Le2}(ii) provides a sufficient condition in 
terms of the function $L_{x_0,x_0}$ that ensures that \eqref{e:+R} holds. 

\begin{proof} 
  (i)
  Indeed, it is well-known that  the mean occupation
  measure of an excursion of $Y$ yields an invariant measure of $Y$; see, for instance,
  Getoor  \cite[\S 7]{Getoor}.
  Moreover, since $Y$ is irreducible and recurrent, its invariant measure is unique
  up to multiplication by a constant; see \cite[Theorem 1]{Har-inv}.
  
  The absolute continuity assertion is deduced  
  from the fact that $Y$ is piecewise deterministic, 
  and more precisely follows the deterministic flow  $\dd y(t)= c(y(t))\dd t$ 
  between its jump times. Specifically, one has then 
  $$\int_0^{H_Y(x_0)} f(Y_s)\dd s=\int_0^{\infty} f(y) \frac{N(y)}{c(y)}\dd y,$$
  where $N(y)={\rm Card}\{t\in[0, H_Y(x_0)): Y_t=y\}$ is the number of visits to $y$ of the excursion of $Y$ away from $x_0$. In the notation of the statement, it is readily checked that 
  $\QQ_{x_0}(N(y))= q(x_0,y)/q(y,x_0)$, and this yields the expression for the density.

(ii) Using the formula for $m_0$, the probability tilting, and the martingale property of ${\mathcal M}$,  we have
\begin{eqnarray*}
\ip{m_0}{\bf 1}&=& \int_0^{\infty} (1-\QQ_{x_0}(H_Y(x_0)\leq  t))\dd t \\
&=&  \int_0^{\infty}  (1-\EE_{x_0}\left( {\mathcal M}_t, H(x_0)\leq t\right)) \dd t \\
&=&  \int_0^{\infty}\left (1-  \EE_{x_0}\left( {\mathcal M}_{H(x_0)}, H(x_0) \leq t  \right)\right ) \dd t \\
&=&  \int_0^{\infty} \EE_{x_0}\left( {\mathcal M}_{H(x_0)}, t<H(x_0) <\infty  \right) \dd t \\
&=& \EE_{x_0}\left(H(x_0) {\mathcal M}_{H(x_0)}, H(x_0)<\infty\right).
\end{eqnarray*}
This proves the first assertion (eventually replacing $x_0$ by $x$, which only affects the invariant measure by a constant factor). 

The second assertion follows then from uniqueness of the stationary distribution and the fact that the maps $y\mapsto q(x_0,y)$ and $y\mapsto q(y,x_0)$ both have limit $1$ as $y$ tends to $x_0$. This claim can be proved much in the same way as Corollary \ref{C5}, and the full details are left to the reader. 
\end{proof}
We also point at  the following alternative expressions for the occupation measure $m_0$: 
\begin{eqnarray*}
\ip{m_0}{f} &=&\EE_{x_0}\left(\e^{-\rho H(x_0)} {\mathcal E}_{H(x_0)}\int_0^{H(x_0)} f(X_s)\dd s, H(x_0)<\infty\right)\\
& =&\EE_{x_0}\left(\int_0^{H(x_0)}  \e^{-\rho s}\mathcal{E}_s \ell(X_s)f(X_s) \dd s, H(x_0)<\infty\right),
\end{eqnarray*}
which follow readily from the probability tilting 
and the martingale property of ${\mathcal M}$.

We now state our main result about the asymptotic
behaviour of growth-fragmentation semigroups. 

\begin{theorem}\label{Th2} 
  Assume that \eqref{e:rt0} and \eqref{e:+R} hold,
  so that $Y$ is positive recurrent. Let
  $$\nu(\dd y) \coloneqq \frac{m_0(\dd y) }{\bar \ell(y) \ip{m_0}{\bf 1}} 
    = \frac{\dd y }{ c(y) \bar \ell(y) |L'_{y,y}(\rho)|},\qquad y>0.$$
  Then for every continuous function $f$ with compact support, we have
  $$\lim_{t\to \infty}\e^{-\rho t} T_tf(x)
    = \bar \ell(x) \int_0^{\infty} f(y) \nu(\dd y). 
  $$
\end{theorem}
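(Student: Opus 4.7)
The plan is to convert the left-hand side into an expectation under the tilted measure $\QQ_x$ and then apply an ergodic theorem for the recurrent process $Y$. Starting from the Feynman--Kac formula of Lemma~\ref{Le1}, and recalling that $\mathcal{M}_t = \e^{-\rho t}\mathcal{E}_t \ell(X_t)/\ell(x)$, I would first rewrite
\begin{eqnarr*}
\e^{-\rho t}T_tf(x) &=& x\EE_x\!\left[\e^{-\rho t}\mathcal{E}_t \frac{f(X_t)}{X_t}\right] \\
 &=& \bar\ell(x)\, \EE_x\!\left[\mathcal{M}_t \, \frac{f(X_t)}{\bar\ell(X_t)}\right] = \bar\ell(x)\, \QQ_x\!\left[\frac{f(Y_t)}{\bar\ell(Y_t)}\right],
\end{eqnarr*}
where the last equality is the definition of $\QQ_x$ and $Y$.

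Next, setting $g(y) \coloneqq f(y)/\bar\ell(y)$, the key observation is that $g$ is continuous with compact support in $(0,\infty)$: indeed $f$ has compact support in $(0,\infty)$ and $\bar\ell$ is continuous and strictly positive there by Corollary~\ref{C5} together with positivity of $\ell$. It then remains to prove
\[
\lim_{t\to\infty} \QQ_x[g(Y_t)] = \int_0^\infty g(y)\, \pi_Y(\dd y),
\]
where $\pi_Y$ is the $Y$-stationary probability measure. Once this is done, substituting the explicit density from Corollary~\ref{Co2}(ii) and the definition of $\nu$ will show that the right-hand side equals $\int_0^\infty f(y)\nu(\dd y)$, completing the argument.

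The main obstacle will be the pointwise-in-$t$ convergence of the marginal $\QQ_x$-law of $Y_t$ towards $\pi_Y$. My plan is to exploit the regenerative structure of $Y$ at the successive return times $\sigma_0=0<\sigma_1<\dotsb$ to a reference point $x_0$. By Lemma~\ref{Le4}\ref{Le4:rec} and Corollary~\ref{Co2}(ii) these are $\QQ_{x_0}$-a.s.\ finite, i.i.d., and have finite mean $\ip{m_0}{\mathbf{1}}$. For $x = x_0$ the first-return decomposition gives the renewal equation
\[
\QQ_{x_0}[g(Y_t)] = \int_{[0,t]} U(\dd s)\, \QQ_{x_0}\!\left[g(Y_{t-s})\,\Indic{t-s < \sigma_1}\right],
\]
with $U$ the renewal measure of $(\sigma_n)$. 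Provided the law of $\sigma_1$ under $\QQ_{x_0}$ is non-lattice---which I expect to follow from the fact that $Y$ is piecewise deterministic with continuous flow $c$ between jumps, ensuring a spread-out return-time distribution---the key renewal (Blackwell--Smith) theorem yields
\[
\lim_{t\to\infty} \QQ_{x_0}[g(Y_t)] = \frac{1}{\ip{m_0}{\mathbf{1}}}\, \QQ_{x_0}\!\left[\int_0^{\sigma_1} g(Y_s)\,\dd s\right] = \int g\,\dd \pi_Y,
\]
where the final equality uses the excursion formula for $m_0$ from Corollary~\ref{Co2}(i). For a general starting point $x\neq x_0$, I would use the strong Markov property of $Y$ at the first hitting time $H_Y(x_0)$ (which is $\QQ_x$-a.s.\ finite by point recurrence), together with boundedness of $g$ and dominated convergence, to reduce to the case $x = x_0$.
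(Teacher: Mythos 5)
Your proof is correct and takes essentially the same route as the paper's: rewrite $\e^{-\rho t}T_t f(x)$ via the change of measure as $\bar\ell(x)\,\QQ_x\!\left[f(Y_t)/\bar\ell(Y_t)\right]$ and invoke ergodic theory for the positive recurrent process $Y$. You actually supply more detail than the paper, which simply cites positive recurrence and concludes; your spelling out of the regeneration/key-renewal argument (and your flagging of the non-lattice condition, which indeed holds here since the return time to $x_0$ under $\QQ_{x_0}$ has a density, $Y$ being piecewise deterministic with absolutely continuous jump rate) makes explicit the step the paper leaves implicit.
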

\begin{remark}
  We stress that the convergence in Theorem \ref{Th2}
  can often be significantly strengthened. 
  More precisely, when $Y$ is positive recurrent,
  it is often possible to show by a classical coupling argument, that the weak convergence
  $$\QQ_{x_0}(Y_t \in \dd y) \Longrightarrow 
    \frac{\dd y}{c(y) |L'_{y,y}(\rho)|} $$
  actually holds in the total variation sense.
  Further, when there is a spectral gap, the convergence takes place exponentially fast. 
  See, for instance,  \cite{Hai-conv,MT-book,MT92,MT93a,MT93b}
  for general results in this field.
  It should be plain from the proof below that these properties 
  can then be transferred to the fragmentation semigroup. 
  We will go into more detail on this topic in the next section,
  in the special case when the growth rate $c$ is linear.
\end{remark}

\begin{proof}[Proof (of \autoref{Th2})]
The Feynman-Kac solution to the growth-fragmentation equation given 
  in Lemma \ref{Le1} can be now expressed in terms of $(Y_t)_{t\geq 0}$ as
  $$T_tf(x) =\e^{\rho t} \bar \ell(x)\QQ_x\left( f(Y_t)/\bar\ell(Y_t)\right).$$
   Recall from Corollary \ref{Co2}(ii) that  $Y$ is positive 
  recurrent whenever \eqref{e:+R} holds, and we conclude that
  $$\lim_{t\to \infty} \e^{-\rho t} T_tf(x) 
    = \bar \ell(x) \int_0^{\infty} \frac{f(y)}{\bar \ell (y)}
    \times \frac{1}{c(y) |L'_{y,y}(\rho)|}
    \ \dd y = \bar \ell(x) \ip{\nu}{f}.
  $$ 
\end{proof}

\begin{remark}\label{r:ratio-limit-theorem}
In the same  vein, it might be interesting to point at a similar application of the ratio limit theorem for point recurrent Markov processes (see, for instance, \cite[Corollary 20.8]{Kal}
  for a statement of this theorem in discrete time) which holds also  in the null recurrent case. Specifically, 
  assume \eqref{e:rt0} holds. Then, for every $f,g\in{\mathcal C}_c$ with $g\geq 0$ and $g\not \equiv 0$,
    and every $x>0$, we have
    \[
      \lim_{t\to\infty}
      \frac{\int_0^t e^{-\rho s} T_s f(x)\, \dd s}
      {\int_0^t e^{-\rho s}T_s g(x)\, \dd s}
      =
      \frac{\ip{m_0}{f/\bar\ell}}{\ip{m_0}{g/\bar\ell}}.
    \]

\end{remark}

We now conclude this section by observing that the asymptotic profile
$\nu$ is an eigenmeasure with eigenvalue $\rho$ of the 
growth-fragmentation operator ${\mathcal A}$, at least under some mild assumptions. 
In this direction, recall that ${\mathcal A}\bar f(x)= x\bar {\mathcal A}f(x)$,
where $\bar f(x)=xf(x)$ and $f\in{\mathcal D}(\bar {\mathcal A})$. 

 \begin{proposition}
 \label{P6} Assume \eqref{e:+R} holds and that $\ell$ is bounded away from $0$ 
 on $(0,\infty)$. Then 
 $\nu$   is an eigenmeasure of the dual operator ${\mathcal  A}^*$ 
 of ${\mathcal  A}$, with eigenvalue $\rho$, that is  
 $\ip{\nu}{{\mathcal  A}\bar f} = \rho \ip{\nu}{\bar f}$
 for every $f\in{\mathcal D}(\bar {\mathcal A})$. 
 \end{proposition}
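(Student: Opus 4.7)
The plan is to exploit the recurrent Markov process $Y$ of \autoref{s:ergodic} together with the invariance of its stationary probability distribution $\pi$ under $\mathcal{G}_Y$. The starting point is a simple algebraic identity: for any $f\in\mathcal{D}(\bar{\mathcal{A}})$, setting $g\coloneqq f/\ell$ and using $\mathcal{G}f = \bar{\mathcal{A}}f - \underline c\,f$ from \eqref{e:G}, a direct computation with the formula \eqref{e:gY} for $\mathcal{G}_Y$ yields
$$\mathcal{G}_Y g = \frac{\bar{\mathcal{A}}f - \rho f}{\ell}.$$
The hypothesis that $\ell$ is bounded away from $0$ makes both $g$ and $\mathcal{G}_Y g$ bounded, since $f$ and $\bar{\mathcal{A}}f$ lie in $\mathcal{C}_b$. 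Moreover, $g\ell = f\in\mathcal{D}(\mathcal{G})$ gives $g\in\mathcal{D}_{\ell}(\mathcal{G})$, and \autoref{Le4}(i) then ensures that $g(Y_t) - \int_0^t \mathcal{G}_Y g(Y_s)\,\dd s$ is a $\QQ_x$-local martingale for every $x>0$; being bounded on every finite time interval, it is in fact a true martingale.

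The second step is the standard invariance relation $\int \mathcal{G}_Y g\,\dd\pi = 0$. By \autoref{Co2}(ii), $\pi$ is the stationary probability distribution of $Y$, so $\int \QQ_y[g(Y_t)]\,\pi(\dd y) = \int g\,\dd\pi$ for every $t\geq 0$. Integrating the martingale relation $\QQ_y\bigl[g(Y_t) - \int_0^t \mathcal{G}_Y g(Y_s)\,\dd s\bigr] = g(y)$ against $\pi(\dd y)$ and using Fubini then gives $t\int \mathcal{G}_Y g\,\dd\pi = 0$ for every $t\geq 0$, and hence
$$\int_0^\infty \frac{\bar{\mathcal{A}}f(y) - \rho f(y)}{\ell(y)}\,\pi(\dd y) = 0.$$

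Finally, comparing the explicit densities given for $\pi$ in \autoref{Co2}(ii) and for $\nu$ in \autoref{Th2} yields the pointwise relation $\pi(\dd y) = \ip{m_0}{\mathbf{1}}\,\bar\ell(y)\,\nu(\dd y)$. Substituting this into the previous display, using $\bar\ell(y)/\ell(y) = y$ together with the identities $\mathcal{A}\bar f(y) = y\bar{\mathcal{A}}f(y)$ and $\bar f(y) = yf(y)$, one reads off $\ip{\nu}{\mathcal{A}\bar f} = \rho\,\ip{\nu}{\bar f}$, as desired. The main technical point is the upgrade from local to true martingale, needed in order to take expectations and apply the stationarity of $\pi$; the hypothesis that $\ell$ is bounded away from $0$ is precisely what makes both $g$ and $\mathcal{G}_Y g$ bounded, and so renders this upgrade routine.
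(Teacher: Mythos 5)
Your proposal is correct and its overall skeleton — reduce the claim to $\int \mathcal{G}_Y g\,\dd\pi = 0$ for $g = f/\ell$, use the hypothesis that $\ell$ is bounded away from $0$ to make both $g$ and $\mathcal{G}_Y g$ bounded, then translate back to $\nu$ — matches the paper's. The genuine difference is in how the invariance identity is obtained. The paper works with the excursion occupation measure $m_0$ and applies the optional sampling theorem to the stopped local martingale at the return time $H_Y(x_0)$; you instead integrate the fixed-time martingale identity $\QQ_y\bigl[g(Y_t)\bigr] - \QQ_y\bigl[\int_0^t\mathcal{G}_Y g(Y_s)\,\dd s\bigr] = g(y)$ against the stationary law $\pi = m_0/\ip{m_0}{\mathbf{1}}$ and invoke $\int T_t^Y g\,\dd\pi = \int g\,\dd\pi$ directly. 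Both routes use positive recurrence in an essential way (the paper to make $m_0$ finite so that dominated convergence applies as $t\to\infty$; you so that $\pi$ exists as a probability measure), and yours trades optional sampling at a random time for the explicit normalisation of $m_0$. One small slip: comparing the densities in Corollary~\ref{Co2}(ii) and Theorem~\ref{Th2} actually gives $\pi(\dd y) = \bar\ell(y)\,\nu(\dd y)$ with no extra factor of $\ip{m_0}{\mathbf{1}}$; this is harmless since the identity being proved is homogeneous in $\nu$, but the constant should be dropped.
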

\begin{proof} 
  Setting $ \bar \nu (\dd y) = y \nu(\dd y)$,  we need to check that
  $\ip{\bar \nu}{\bar {\mathcal  A} f} = \rho \ip{\bar \nu}{ f}$
  for every function $f\in{\mathcal D}(\bar {\mathcal A})$.
  Because $\nu$ is proportional to $m/\bar \ell$,  it suffices to prove 
  the identity with $m/\ell$ replacing $\bar \nu$.
  Further,  $\bar {\mathcal A}f={\mathcal G}f +\underline c f$,
  where ${\mathcal G}$ is the infinitesimal generator of $X$. 
  So we have to verify that
  $$
    \ip{m/\ell }{{\mathcal G}f +\underline c f -\rho f}=
    0\qquad \hbox{ for every }
    f\in{\mathcal D}({\mathcal G})={\mathcal D}(\bar {\mathcal A}).
  $$
  That is, using the notation $\mathcal{G}_Y$, defined in \eqref{e:gY},
  for the generator of $Y$,
  we must show
  \begin{equation}\label{e:vp}
    \ip{m }{{\mathcal G}_Y(f/\ell  )}=0
    \qquad \hbox{ for every }
    f\in{\mathcal D}({\mathcal G}).
  \end{equation}

  If we set $g = f/\ell$, then the process given earlier in \eqref{e:qmart}
  is a $\QQ_x$-local martingale.
  Moreover, it remains so when stopped at $H_Y(x)$.
  If we assume that $\ell$ is bounded away from $0$ on $(0,\infty)$, then 
  both $g$ and ${\mathcal G_Y}g$ are bounded. Recall further that the 
  occupation measure $m_0$ of the excursion of $Y$ away from $0$ is finite, 
  since thanks to Corollary \ref{Co2}, \eqref{e:+R} ensures that $Y$ is 
  positive recurrent. We deduce from the optional sampling theorem that 
  $$\QQ_{x_0}\left(\int_0^{H_Y(x_0)} {\mathcal G_Y}g(Y_s)\dd s\right) = 0,$$
  that is, by definition of $m_0$, \eqref{e:vp} holds. 
\end{proof}

For the sake of completeness, we mention the following simple result 
which ensures that $\ell$ remains bounded away from $0$ on $(0,\infty)$. 
We omit the proof, since it is a straightforward modification of
that of Lemma \ref{Le:lb}.

\begin{lemma} Assume  that
 $$\liminf_{x\to 0+}\underline c(x)>\rho\quad \hbox{and} \quad \liminf_{x\to \infty}\underline c(x)>\rho.$$
  Then $\inf_{(0,\infty)} \ell >0$. 
\end{lemma}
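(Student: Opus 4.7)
The plan is to mirror the proof of Lemma \ref{Le:lb}, systematically reversing the direction of the inequalities. First, from the hypothesis, choose $\rho' > \rho$ and $0 < a < x_0 < b$ such that $\underline c(x) \geq \rho'$ for every $x \in (0,a) \cup (b,\infty)$. By Corollary \ref{C5}, the function $\ell$ is continuous and strictly positive on $(0,\infty)$, so $m \coloneqq \min_{[a,b]} \ell > 0$.

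For $0 < x < a$: because $X$ has no positive jumps, its trajectory on $[0, H(a))$ lies in $(0, a]$, where $\underline c \geq \rho'$. Thus $\mathcal{E}_{H(a)} \geq \e^{\rho' H(a)}$, and so
\[
\e^{-\rho H(a)} \mathcal{E}_{H(a)} \geq \e^{(\rho' - \rho) H(a)} \geq 1 \qquad \text{on } \{H(a) < \infty\}.
\]
Consequently $L_{x,a}(\rho) \geq \PP_x(H(a) < \infty)$, and by Lemma \ref{Le3}(iii), $\ell(x) = L_{x,a}(\rho)\, \ell(a) \geq m\,\PP_x(H(a) < \infty)$.

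For $x > b$: the analogous argument uses the first entry time $H(a,b) \coloneqq \inf\{t > 0 : X_t \in [a,b]\}$. On $[0, H(a,b))$ the path remains in $(0,a) \cup (b,\infty)$, where $\underline c \geq \rho'$, so the same pathwise estimate yields $\e^{-\rho H(a,b)}\, \mathcal{E}_{H(a,b)} \geq 1$ on $\{H(a,b) < \infty\}$. Applying the strong Markov property at $H(a,b)$, and using $\ell(X_{H(a,b)}) \geq m$, we obtain
\[
\ell(x) = \EE_x\bigl( \e^{-\rho H(a,b)}\, \mathcal{E}_{H(a,b)}\, \ell(X_{H(a,b)}),\; H(a,b) < \infty \bigr) \geq m\,\PP_x(H(a,b) < \infty).
\]

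The main remaining obstacle is to verify that $\PP_x(H(a) < \infty)$ and $\PP_x(H(a,b) < \infty)$ are uniformly bounded below in their respective ranges. This is the one real departure from Lemma \ref{Le:lb}, whose upper-bound version trivially absorbed this probability into the constant $1$. Under the strong-drift hypothesis, however, $c(x) \geq \rho' x$ drives the deterministic flow towards $[a,b]$ in finite time, and a quantitative estimate---for instance, bounding the probability of a no-jump excursion that reaches $[a,b]$, or exploiting irreducibility together with a Lyapunov-type argument built from the drift---furnishes the required uniform lower bound, concluding the proof.
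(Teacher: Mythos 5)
Your mirroring of the proof of Lemma~\ref{Le:lb} is carried out correctly, and you are right to flag that the argument, when reversed, yields only $\ell(x)\geq m\,\PP_x(H(a)<\infty)$ for $x<a$ (resp.\ $\ell(x)\geq m\,\PP_x(H(a,b)<\infty)$ for $x>b$) rather than a constant lower bound --- this is a genuine obstacle, not a cosmetic one, and it is precisely the asymmetry between the upper-bound lemma and this one. However, neither of the remedies you sketch actually closes it. Recall from the proof of Corollary~\ref{C5} that $\PP_y(\Lambda_a)=\exp(-\int_y^a K(z)/c(z)\,\dd z)$; since $K$ is bounded while $c(z)\geq\rho'z$ on $(0,a)$, this is only bounded below by $(y/a)^{\|K\|_\infty/\rho'}$, which tends to $0$ as $y\to 0+$, so the no-jump-excursion estimate gives no uniform lower bound. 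A Lyapunov argument built solely on the drift is also insufficient here: the jumps of $X$ are multiplicative and may be proportionally unbounded, so a positive drift for $\log X$ near $0$ does not by itself force $H(a)$ to be $\PP_x$-a.s.\ finite.

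In fact, the uniform lower bound on $\PP_x(H(a,b)<\infty)$ can genuinely fail, and with it the conclusion of the lemma: in the homogeneous linear-growth setting of \autoref{s:levy} with $X$ transient, we have $\underline c\equiv a>\rho=\kappa(\theta_0)$ whenever $\theta_0\neq 1$, so the hypotheses are satisfied, yet $\ell(x)=(x/x_0)^{\theta_0-1}$ tends to $0$ at one end of $(0,\infty)$. So the statement, as written, requires an additional hypothesis (for example $\inf_{x>0}\PP_x\bigl(H(a,b)<\infty\bigr)>0$, or a recurrence assumption on $X$ that supplies it), and a complete proof would have to state and use that hypothesis explicitly. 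You correctly isolated the missing ingredient, but the concluding sentence claims more than the sketch delivers.
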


\section{The case of linear growth rate}
\label{s:levy}

We shall now discuss in detail the simple case when  the function $c$ is linear, namely
$$c(x)={a} x\,, \qquad x>0,$$
for some ${a} >0$. We stress that is equivalent to requesting that
the identity function is an eigenfunction of ${\mathcal A}$ with eigenvalue ${a}$,

We first consider the case in which $X$ is recurrent.
Then, $\PP_{x_0}(H(x_0)<\infty)=1$,
and we see that \eqref{e:rt0} holds with $\rho = a$. Hence  ${\mathcal	E}_t\equiv \e^{ at}$ and 
 the semigroup $T_t$ representing the solution to the growth-fragmentation
equation \eqref{e:mu} is simply given by
\[ T_t f(x) = x \e^{at} \EE_x[  f (X_t)/X_t], \qquad f\in \bar{\mathcal{C}}_c, \quad x>0. \]
Even more, $\ell(x)\equiv 1$, and 
the  martingale multiplicative functional is trivial, namely
${\mathcal M}_t\equiv 1$, and so we have $Y=X$. 
As a consequence, if $X$ is also positive recurrent and thus possesses a
(unique) stationary distribution, say $\sigma$, then we have the convergence
\begin{equation}\label{e:cvexp}
  \lim_{t\to\infty} e^{-at} T_t f(x) = x\ip{\nu}{f},\quad \hbox{with } \nu(\dd  y) = y^{-1}\sigma(\dd y)
\end{equation}
for all continuous $f$ with compact support, as we showed in \autoref{Th2}.

In this case, the main difficulty is therefore to provide
explicit criteria, in terms of $k$, to ensure that $X$ is positive
recurrent, or even exponentially ergodic. There is a wealth
of literature concerning such conditions, with the main
technique being the application of
so-called Foster--Lyapunov criteria.
A good introduction to the field may be found in \citet{Hai-conv},
and the classic monograph of \citet{MT-book} gives a thorough
grounding in the discrete-time setting.
The basic notions have been applied and extended many times;
as a sample, \cite{MT93b} discusses storage models and queues,
\cite{BCG-rate} looks at the example of kinetic Fokker-Planck
equations, and \cite{HMS-asym} studies stochastic delay
equations and the stochastic Navier--Stokes equations.

Recently, \citet{Bou} made a study of the conservative growth-fragmentation
equation, which is closely related to our equation \eqref{e:gfe}.
Among several interesting results, he studied the asymptotic behaviour of
solutions by means of Foster--Lyapunov techniques.
Some of the key assumptions in \cite{Bou} are as follows:
\begin{assumption}\label{a:bou}
  \begin{enumerate}
    \item\label{i:bou:irred}
    $K(x) >0$ and $c(x) > 0$ for all $x >0$.
    \item\label{i:bou:asym}
    There exist constants $\beta_0,\beta_\infty,\gamma_0,\gamma_\infty$ such that
    \begin{equation}\label{e:bou:beta}
      K(x) \sim \beta_0 x^{\gamma_0} \text{ as }x\to 0
      \qquad\text{and}\qquad
      K(x) \sim \beta_\infty x^{\gamma_\infty} \text{ as } x\to\infty.
    \end{equation}
    \item\label{i:bou:M}
    If we define
    \[ M_x(s) \coloneqq \frac{1}{K(x)}\int_0^x (y/x)^{s}\, \bar{k}(x,y) \, \dd y
      \qquad \text{and}\qquad
      M(s) \coloneqq \sup_{x>0} M_x(s),
    \]
    then there exist $A>0$ such that $M(A)<1$, and $B>0$ such that
    $M(-B)<\infty$.
  \end{enumerate}
\end{assumption}
Of course,
some restrictions on the exponents in point \ref{i:bou:asym}
are imposed by our assumptions \eqref{e:c-bound} and \eqref{H8},
and these will be made explicit below.

The methods of Bouguet are natural to apply in our situation, and the arguments
carry over with minimal modifications. We therefore present in the following
result a sufficient criterion for exponential ergodicity, which is the
strongest case; weaker assumptions can be made in order to show only
ergodicity, and we refer to \cite{Bou} for more details.

For the result below, recall that by the Riesz representation
theorem, for every $x>0$,
there exists a family of
measures $(\mu_t^{x})_{t\ge 0}$ with the property that
$\ip{\mu_t^x}{f} = T_t f(x)$ for any continuous, compactly supported
function $f\from (0,\infty)\to\RR$. Moreover, the
measures $yx^{-1} \e^{-at} \mu_t^x(\dd y)$ are probability
measures. Finally, we recall the definition of the
\emph{total variation distance} between two probability
measures $P$ and $Q$ on $(0,\infty)$ 
as being given by
\[ \mathrm{d}_{\mathrm{TV}}(P,Q) = \frac{1}{2}\sup\{ \abs{P(B)-Q(B)} : B\subset (0,\infty), B\  {\hbox{Borel set}}\}. \]
This discussion permits us to state the following result:

\begin{proposition}\label{Pr1}%
  Suppose $c(x)={a} x$ for some ${a} >0$ and that \autoref{a:bou} is in place.
  Furthermore, assume that
  $\gamma_\infty = 0$ and $a/\beta_\infty < (1-M(A))/A$,
  and that either $\gamma_0>0$ or else $\gamma_0=0$ and
  $a/\beta_0 < (M(-B)-1)/B$.
  Let $V\from(0,\infty)\to(0,\infty)$ be a smooth function such that $V(x) = x^{-B}$
  for $x\le 1$ and $V(x) = x^A$ for $x\ge 2$.
  
  Then, the Markov process $X$ has a unique stationary distribution $\sigma$.
  There exist two constants $\varepsilon>0$ and  $C<\infty$ such that,
  for every $x>0$, the semigroup $T_t$ giving the solution
  of the growth-fragmentation \eqref{e:mu} has the following asymptotic
  behaviour: 
  \[ \dd_{\rm TV}\left( \e^{-{a} t} \frac{y}{x}\mu_t^x(\dd y),\sigma(\dd y)  \right)
   \leq C(1+V(x)) \e^{-\varepsilon t}\,.
  \]
\end{proposition}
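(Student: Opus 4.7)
The plan is to establish exponential $V$-ergodicity of the Markov process $X$ by a Foster--Lyapunov argument and then transfer the result to the growth-fragmentation semigroup via the Feynman--Kac formula. Since $c(x)=ax$, we have $\underline c\equiv a$ and ${\mathcal E}_t=\e^{at}$, so \autoref{Le1} yields $T_tf(x) = x\e^{at}\EE_x[f(X_t)/X_t]$. This identifies the probability measure $yx^{-1}\e^{-at}\mu_t^x(\dd y)$ with $\PP_x(X_t\in\dd y)$, so the conclusion of the proposition is equivalent to the estimate $\dd_{\rm TV}(\PP_x(X_t\in\cdot),\sigma) \le C(1+V(x))\e^{-\varepsilon t}$ for $X$ itself, where $\sigma$ is its (to be constructed) unique stationary law.

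The heart of the proof is to verify the Foster--Lyapunov drift condition ${\mathcal G}V\le -\lambda V + \kappa\,\Ind_{\mathcal{K}}$ for some compact $\mathcal{K}\subset(0,\infty)$ and some $\lambda,\kappa>0$. A direct calculation from \eqref{e:G} and the definition of $M_x(s)$ gives
\[
  \frac{{\mathcal G}V(x)}{V(x)} = Aa + K(x)\bigl(M_x(A)-1\bigr)\ \text{for } x\ge 2,
  \qquad
  \frac{{\mathcal G}V(x)}{V(x)} = -Ba + K(x)\bigl(M_x(-B)-1\bigr)\ \text{for } x\le 1.
\]
The assumptions $\gamma_\infty=0$, $K(x)\to\beta_\infty$ and $M_x(A)\le M(A)<1$ together with the hypothesis $a/\beta_\infty<(1-M(A))/A$ force the first expression to be bounded above by a strictly negative constant as $x\to\infty$; and at $x\to 0^+$, either $\gamma_0>0$ (so $K(x)\to 0$ and the bracket tends to $-Ba<0$) or $\gamma_0=0$ (in which case one exploits the hypothesis on $a/\beta_0$ together with the definition of $M(-B)$ to produce a negative limit). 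On the intermediate region $[1,2]$ the generator ${\mathcal G}V$ is bounded by a constant, and the three estimates combine to give the drift condition outside a suitable compact set with a common $\lambda>0$.

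It then remains to check the standard side conditions, namely that every compact subset of $(0,\infty)$ is petite for $X$ and that $X$ is $\psi$-irreducible and aperiodic. These follow from the Feller property established in \autoref{s:fk}, the irreducibility hypothesis on $k$, and the fact that the deterministic flow can carry $X$ continuously through any compact set. The classical Meyn--Tweedie theorem \cite{MT-book} (see also \cite{Hai-conv}) then delivers both existence and uniqueness of $\sigma$ and the $V$-geometric ergodicity, which combined with the first paragraph gives the desired estimate for $T_t$. The main obstacle is the careful verification of the drift condition at $x\to 0^+$ in the boundary case $\gamma_0=0$, where the jump term $K(x)(M_x(-B)-1)$ does not vanish in the limit and one must control both the precise asymptotics of $M_x(-B)$ as $x\to 0$ and the strict inequality in the hypothesis. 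These computations are essentially those performed by Bouguet \cite{Bou} for the closely related conservative equation, so the bulk of the adaptation consists in checking that his arguments carry over to our setting.
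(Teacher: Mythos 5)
Your proposal follows the same route as the paper: reduce to exponential $V$-ergodicity of $X$ via the Feynman--Kac identity, verify the Foster--Lyapunov drift condition $\mathcal{G}V\le -\alpha V+\delta$ outside a compact set, check petiteness of compacts, and invoke the Meyn--Tweedie theorem (the paper uses \cite[Theorem~6.1]{MT93b} explicitly). The structure and all the key ideas match.

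One small imprecision worth flagging: you state as an \emph{equality} that $\mathcal{G}V(x)/V(x) = Aa + K(x)\bigl(M_x(A)-1\bigr)$ for $x\ge 2$. This would hold if $V(y)=y^A$ for all $y\in(0,x)$, but the integral in $\mathcal{G}V(x)$ involves $V(y)$ for all $y<x$, including $y\le 1$ where $V(y)=y^{-B}\ge y^A$ and $y\in(1,2)$ where $V$ is the smooth interpolation. So one only gets an \emph{upper bound}, and the correct version carries additional positive contributions of order $K(x)\bigl(M_x(-B)x^{-(A+B)} + R x^{-A}\bigr)$ (the paper's $R$ is a bound on $V$ over $[1,2]$). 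These corrections vanish as $x\to\infty$, so your conclusion --- that the bracket tends to a strictly negative constant under the hypothesis $a/\beta_\infty<(1-M(A))/A$ --- is still correct, but the calculation as written is not. The $x\le 1$ case is genuinely an equality because there $V(y)=y^{-B}$ for all $y\le x$, which you have right.
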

\begin{proof}
  We summarise the main points of the proof, which \citet{Bou} gives
  in greater detail. The idea is to show that
  the Markov process $X$ is exponentially ergodic, using the results
  of \cite[Theorem 6.1]{MT93b}. Thus, in the terminology of that work,
  we need to show that
  compact subsets of $(0,\infty)$ are petite for $X$, that $V$ is a norm-like
  function, and that there exist $\alpha,\delta>0$ such that
  \begin{equation}\label{e:Lyap}
    \mathcal{G}V(x) \le -\alpha V(x) + \delta.
  \end{equation}
  The petiteness of compact sets is shown in \cite[p.~6]{Bou}, and requires
  nothing more than the fact that, on compact subsets
  of $(0,\infty)$, $c$ is bounded away from zero and infinity
  and $K$ is bounded away from infinity.
  The condition that $V$ be norm-like entails that $V(x) \to\infty$
  as $x\to0$ or $x\to\infty$, which is plainly true, as well as that
  it is in the domain of the generator.
  
  The condition \eqref{e:Lyap} requires the more stringent conditions
  on the asymptotic exponents and the existence of values $A$ and $B$.
  We briefly describe the argument. For $x\ge 2$, we have
  \begin{eqnarr*}
    \mathcal{G}V(x)
    &\le&
    \bigl\{ aA - K(x)
    \bigl( 1-M_x(A) - M_x(-B)x^{-(A+B)} - R x^{-A} \bigr) \bigr\} V(x),
  \end{eqnarr*}
  where $R = \min_{x\in [1,2]} V(x)>0$;
  and for $x\le 1$, we have
  \[ \mathcal{G}V(x)
    \le \bigl\{ -aB + K(x) \bigl(M_x(-B)-1\bigr) \bigr\} V(x)
  \]

  In the case $x\ge 2$, the term within braces is equal to
  $aA-K(x)(1-M(A)+o(1))$, and as $x\to\infty$, this converges
  to a negative constant precisely when $a/\beta_\infty < (1-M(A))/A$.
  Similarly, in the case $x\le 1$, the term in braces is bounded by
  a negative constant when $x$ is close enough to zero,
  provided the conditions of the theorem hold.
  Since $V$ is bounded on compact subsets of $(0,\infty)$,
  this implies that \eqref{e:Lyap} holds, and so
  \cite[Theorem~6.1]{MT93b} completes the proof.
\end{proof}
\begin{remark}
  The reader who compares our result to \cite{Bou} will notice
  that many cases in the latter work are not accommodated by our
  assumptions. The most significant difference is that, in
  \cite{Bou}, the fragmentation rate $K$ may be unbounded.
  Giving a version of \autoref{Pr1} in this case would involve
  only a minor adaptation of the proof, but several earlier
  results of this work, such as the identification
  of the eigenmeasure $\nu$ of $\mathcal{A}^*$ in \autoref{P6},
  would become
  significantly more difficult. Since our main goal
  in this article is to point out connections with spectral
  theory, we prefer not to stray too far from the situation
  where such results may be proved.
\end{remark}

We shall next discuss the situation when $X$ is transient,
in which we observe different asymptotic behaviour. In this
part, we shall focus on the case where the fragmentation kernel is \emph{homogeneous}, in the sense that
$$\bar{k}(x,y) = y^{-1} \pi(\log(y/x)) \quad\hbox{for some function } \pi \in L_+^1((-\infty,0)).$$

Then the operator $\mathcal{G}$ is given by
\[
  \mathcal{G} f(x) = ax f'(x) + \int_0^x( f(y)-f(x)) \pi(\log(y/x)) y^{-1} \, \dd y ,
  \qquad f\in D(\bar{\mathcal{A}}).
\]
Our analysis will hinge on the observation that $\mathcal{G}$ can be
related to the generator of a L\'evy process, as we shall shortly
make clear.

The growth-fragmentation equation given by the corresponding
operator $\mathcal{A}$  was studied in \cite{Haas1, DouEsc, BW-gfe},
among others. Indeed,
the process $X$ corresponds to the so-called `tagged fragment'
in a random particle model, as we briefly described in
\cite[\S 6]{BW-gfe}. Homogeneous growth-fragmentation equations
are often studied
via a `cumulant function' $\kappa$, which is defined as follows.
For $\theta \in \RR$, we define $h_\theta\from(0,\infty)\to\RR$ by
$h_{\theta}(x) = x^{\theta}$, and then $h_{\theta}$
is an eigenfunction of (an extension of) ${\mathcal A}$ with eigenvalue $\kappa(\theta)$;
that is, 
$\mathcal{A}h_\theta = \kappa(\theta) h_\theta$. 
The function $\kappa$ can be given explicitly as
\[ \kappa(\theta) = a\theta + \int_0^1 (y^{\theta-1}-1) \pi(\log y) y^{-1} \,\dd y,
  \qquad \theta \in \RR,
\]
and it is smooth and strictly convex.
Our basic assumption, for the remainder of this section, is that
there exists some $\theta_0 \ne 1$, lying in the interior
of the domain of $\kappa$, with the property that
$\kappa'(\theta_0) = 0$. Observe that in particular,
$\kappa(\theta_0)=\min_{\theta \in \RR} \kappa(\theta)$.

We now look more closely at  $X$, and introduce the 
following auxiliary process, which is a L\'evy process;
for further background on this class of processes,
we refer to \cite{Ber-Levy,Kyp,Sato}.
Consider a Lévy process $\xi$
composed of a compound Poisson process with negative jumps plus a drift
$a>0$, and such that $\xi$
has an absolutely continuous Lévy measure with density $\pi$.
Let $\psi$ represent the Laplace exponent of this Lévy process,
which means that $\EE[\e^{\theta \xi_t}\mid \xi_0=0] = \e^{t\psi(\theta)}$.
This function is smooth and strictly convex, with Lévy--Khintchine
representation as follows:
\[ \psi(\theta) = a\theta + \int_{-\infty}^0 (\e^{\theta u}-1) \pi(u)\,\dd u
  = a\theta + \int_{0}^1 (u^{\theta}-1) \pi(\log u) u^{-1} \, \dd u,
  \qquad \theta\in\RR. \]
It is 
related to $\kappa$ via the equation $\psi(\theta) = \kappa(\theta+1)-\kappa(1)$,
from which we see that $\theta_0$ satisfies
$\psi'(\theta_0-1) = 0$.
The existence of
$\theta_0$ implies that $\psi'(0) = \EE[\xi_1 \mid \xi_0=0] \ne 0$,
which means that either $\lim_{t\to\infty} \xi_t = \infty$
or $\lim_{t\to\infty} \xi_t = -\infty$. In particular,
$\xi$ is a transient process.

By comparing $\mathcal{G}$ with the generator
of a Lévy process \cite[Theorem 31.5]{Sato},
$X$ may be identified as
\[ X_t = \e^{\xi_t}, \qquad t\ge 0, \]
and so $X$ is also transient.

A natural component of our analysis in this situation
is the inverse function $\Phi$ of $\psi$, defined by
$\Phi(q) = \sup\{ \theta\in\RR: \psi(\theta) = q\}$.
It appears in the following expression,
in which $\tau(0) = \inf\{t > 0 : \xi_t = 0\}$:
\[ \EE[ \e^{-q \tau(0)} ;\tau(0)<\infty \mid \xi_0 = 0] = 1-\frac{1}{\Phi'(q)}. \]
This formula can be found, for instance, in Lemma 2(i) of \citet{PardoPR}.

From this, we can calculate the spectral radius of the growth-fragmentation
equation associated with $\mathcal{G}$. Since the return time of
$\xi$ to its starting point is equal to that of $X$, we calculate,
using the inverse function theorem,
\[ L_{x_0,x_0}(q) = 1-\frac{1}{\Phi'(q-a)} = 1-\psi'(\Phi(q-a)). \]
This implies that
$\rho = \kappa(\theta_0) = a + \psi(\theta_0-1) < a$, so that contrary
to the situation where $X$ is recurrent, here
the spectral radius is strictly less than the drift coefficient $a$.

Moreover,
\[ -L_{x_0,x_0}'(q) = \frac{\psi''(\Phi(q-a))}{\psi'(\Phi(q-a))} , \]
and as $q \downarrow \rho$, we obtain, by the
strict convexity of $\psi$, that $-L_{x_0,x_0}'(\rho) = \infty$.
Thus, we are in a situation where  the process $Y$ is \emph{null} recurrent.

We now study the function $\ell$ in more detail.
In order to compute it explicitly, we recall (from
\cite[\S 3.3]{Kyp}, for instance) that the process
$(\e^{(\theta_0-1)\xi_t - t\psi(\theta_0-1)})_{t\ge 0}$ is a non-negative
martingale.
Since $\lim_{t\to\infty} \xi_t/t = \psi'(0) \ne 0$
almost surely (see \cite[Exercise 7.2]{Kyp}),
the martingale converges almost surely to $0$ as $t\to\infty$.
We obtain the following explicit formula for $\ell$,
applying in the third equality
the optional sampling theorem \cite[Theorem~II.77.5]{RW1} at $H(\log x_0)$.
\begin{eqnarr*}
  \ell(x) = L_{x,x_0}(\rho)
  &=&
  \EE[ \e^{-(\rho-a)H(\log x_0)} ; H(\log x_0) < \infty \mid \xi_0 = \log x] \\
  &=&
  \EE[ \e^{(\theta_0-1)\log(x_0) -\psi(\theta_0-1)H(\log x_0)} ; H(\log x_0) <\infty
  \mid \xi_0 = \log x]
  \e^{-(\theta_0-1)\log(x_0)} \\
  &=& \e^{(\theta_0-1)(\log x - \log x_0)}
  \\
  &=& (x/x_0)^{\theta_0-1}.
\end{eqnarr*}

Furthermore, we can calculate directly from \eqref{e:gY} that the generator
of $Y$ is given by
\[ \mathcal{G}_Y g(x)
  = ax g'(x) + \int_{0}^x \bigl(g(y)-g(x) \bigr) (y/x)^{\theta_0-1} \pi(\log(y/x))
  \, \frac{\dd y}{y}.
\]
In other words, we have the representation $Y_t = \exp(\eta_t)$, where
$\eta$ is a Lévy process whose Laplace 
exponent is given by $\theta \mapsto \psi(\theta+\theta_0-1)-\psi(\theta_0-1)$.
This Lévy process has the property that $\EE[\eta_1 \mid \eta_0=0] = 0$, which
implies that $\eta$ is recurrent (see, for instance, \cite[Remark 37.9]{Sato}.)

Finally, we wish to study the asymptotic behaviour of the semigroup $T_t$, or equivalently,
the measures $\mu_t^x$ introduced earlier. The semigroup can be identified explicitly
in terms of our L\'evy process $\eta$ as:
\[ 
  T_tf(x)
  = 
  \e^{\rho t} \bar \ell(x) \QQ_x\bigl[ f(Y_t)/\bar \ell(Y_t)\bigr ]
  = \e^{\kappa(\theta_0) t} x^{\theta_0}
  \EE\bigl[ f(\e^{\eta_t})\e^{-\theta_0\eta_t}\mid \eta_0=\ln x\bigr].
\]
The asymptotics of this semigroup could be studied using \autoref{r:ratio-limit-theorem}.
However, more precise information can be obtained by applying
instead a local central limit theorem for $\eta$ (see \cite[Theorem 8.7.1]{Bor-pt}.)
In this way, one recovers the formula
\[
  T_t f(x)
  \sim \frac{x^{\theta_0} e^{t \kappa(\theta_0)}}{\sqrt{2 \pi t \kappa^{\prime\prime}(\theta_0))}}
  \int_0^\infty f(y) y^{-(\theta_0+1)} \, \dd y,
  \text{ as } t\to\infty,
\]
for $f$ continuous and compactly supported, which was stated as
\cite[Corollary 3.4]{BW-gfe}, under different
assumptions.

\bibliography{gfe}

\begin{thebibliography}{35}
\providecommand{\natexlab}[1]{#1}
\providecommand{\url}[1]{\texttt{#1}}
\expandafter\ifx\csname urlstyle\endcsname\relax
  \providecommand{\doi}[1]{doi: #1}\else
  \providecommand{\doi}{doi: \begingroup \urlstyle{rm}\Url}\fi

\bibitem[Bakry et~al.(2008)Bakry, Cattiaux, and Guillin]{BCG-rate}
D.~Bakry, P.~Cattiaux, and A.~Guillin.
\newblock Rate of convergence for ergodic continuous {M}arkov processes:
  {L}yapunov versus {P}oincar\'e.
\newblock \emph{J. Funct. Anal.}, 254\penalty0 (3):\penalty0 727--759, 2008.
\newblock ISSN 0022-1236.
\newblock \doi{10.1016/j.jfa.2007.11.002}.

\bibitem[Banasiak et~al.(2012)Banasiak, Pich{\'o}r, and Rudnicki]{BPR12}
J.~Banasiak, K.~Pich{\'o}r, and R.~Rudnicki.
\newblock Asynchronous exponential growth of a general structured population
  model.
\newblock \emph{Acta Appl. Math.}, 119:\penalty0 149--166, 2012.
\newblock ISSN 0167-8019.
\newblock \doi{10.1007/s10440-011-9666-y}.

\bibitem[Bertoin(1996)]{Ber-Levy}
J.~Bertoin.
\newblock \emph{L\'evy processes}, volume 121 of \emph{Cambridge Tracts in
  Mathematics}.
\newblock Cambridge University Press, Cambridge, 1996.
\newblock ISBN 0-521-56243-0.

\bibitem[Bertoin and Watson(2016)]{BW-gfe}
J.~Bertoin and A.~R. Watson.
\newblock Probabilistic aspects of critical growth-fragmentation equations.
\newblock \emph{Adv. in Appl. Probab.}, 48\penalty0 (A):\penalty0 37–61, Jul
  2016.
\newblock \doi{10.1017/apr.2016.41}.

\bibitem[Borovkov(2013)]{Bor-pt}
A.~A. Borovkov.
\newblock \emph{Probability theory}.
\newblock Universitext. Springer, London, 2013.
\newblock ISBN 978-1-4471-5200-2; 978-1-4471-5201-9.
\newblock \doi{10.1007/978-1-4471-5201-9}.

\bibitem[Bouguet(2016)]{Bou}
F.~Bouguet.
\newblock A probabilistic look at conservative growth-fragmentation equations.
\newblock Preprint, \arxivref{1609.02414v2}{math.PR}, 2016.

\bibitem[C{\'a}ceres et~al.(2011)C{\'a}ceres, Ca{\~n}izo, and Mischler]{CCM11}
M.~J. C{\'a}ceres, J.~A. Ca{\~n}izo, and S.~Mischler.
\newblock Rate of convergence to an asymptotic profile for the self-similar
  fragmentation and growth-fragmentation equations.
\newblock \emph{J. Math. Pures Appl. (9)}, 96\penalty0 (4):\penalty0 334--362,
  2011.
\newblock ISSN 0021-7824.
\newblock \doi{10.1016/j.matpur.2011.01.003}.

\bibitem[Calvez et~al.(2012)Calvez, Doumic, and Gabriel]{CDG}
V.~Calvez, M.~Doumic, and P.~Gabriel.
\newblock Self-similarity in a general aggregation-fragmentation problem.
  {A}pplication to fitness analysis.
\newblock \emph{J. Math. Pures Appl. (9)}, 98\penalty0 (1):\penalty0 1--27,
  2012.
\newblock ISSN 0021-7824.
\newblock \doi{10.1016/j.matpur.2012.01.004}.

\bibitem[Chafa{\"{\i}} et~al.(2010)Chafa{\"{\i}}, Malrieu, and Paroux]{CMP10}
D.~Chafa{\"{\i}}, F.~Malrieu, and K.~Paroux.
\newblock On the long time behavior of the {TCP} window size process.
\newblock \emph{Stochastic Process. Appl.}, 120\penalty0 (8):\penalty0
  1518--1534, 2010.
\newblock ISSN 0304-4149.
\newblock \doi{10.1016/j.spa.2010.03.019}.

\bibitem[Davis(1984)]{Dav-pdmp}
M.~H.~A. Davis.
\newblock Piecewise-deterministic {M}arkov processes: a general class of
  nondiffusion stochastic models.
\newblock \emph{J. Roy. Statist. Soc. Ser. B}, 46\penalty0 (3):\penalty0
  353--388, 1984.
\newblock ISSN 0035-9246.
\newblock With discussion.

\bibitem[Doumic and Escobedo(2016)]{DouEsc}
M.~Doumic and M.~Escobedo.
\newblock Time asymptotics for a critical case in fragmentation and
  growth-fragmentation equations.
\newblock \emph{Kinet. Relat. Models}, 9\penalty0 (2):\penalty0 251--297, 2016.
\newblock ISSN 1937-5093.
\newblock \doi{10.3934/krm.2016.9.251}.

\bibitem[Engel and Nagel(2000)]{EN00}
K.-J. Engel and R.~Nagel, editors.
\newblock \emph{One-parameter semigroups for linear evolution equations},
  volume 194 of \emph{Graduate Texts in Mathematics}.
\newblock Springer-Verlag, New York, 2000.
\newblock ISBN 0-387-98463-1.

\bibitem[Ethier and Kurtz(1986)]{EK-mp}
S.~N. Ethier and T.~G. Kurtz.
\newblock \emph{Markov processes}.
\newblock Wiley Series in Probability and Mathematical Statistics: Probability
  and Mathematical Statistics. John Wiley \& Sons, Inc., New York, 1986.
\newblock ISBN 0-471-08186-8.
\newblock \doi{10.1002/9780470316658}.
\newblock Characterization and convergence.

\bibitem[Getoor(1979)]{Getoor}
R.~K. Getoor.
\newblock Excursions of a {M}arkov process.
\newblock \emph{Ann. Probab.}, 7\penalty0 (2):\penalty0 244--266, 1979.
\newblock ISSN 0091-1798.

\bibitem[Greer et~al.(2006)Greer, Pujo-Menjouet, and Webb]{GPW06}
M.~L. Greer, L.~Pujo-Menjouet, and G.~F. Webb.
\newblock A mathematical analysis of the dynamics of prion proliferation.
\newblock \emph{J. Theoret. Biol.}, 242\penalty0 (3):\penalty0 598--606, 2006.
\newblock ISSN 0022-5193.
\newblock \doi{10.1016/j.jtbi.2006.04.010}.

\bibitem[Haas(2003)]{Haas1}
B.~Haas.
\newblock Loss of mass in deterministic and random fragmentations.
\newblock \emph{Stochastic Process. Appl.}, 106\penalty0 (2):\penalty0
  245--277, 2003.
\newblock ISSN 0304-4149.
\newblock \doi{10.1016/S0304-4149(03)00045-0}.

\bibitem[Hairer()]{Hai-conv}
M.~Hairer.
\newblock Convergence of {M}arkov processes.
\newblock URL \url{http://hairer.org/notes/Convergence.pdf}.
\newblock Online lecture notes.

\bibitem[Hairer et~al.(2011)Hairer, Mattingly, and Scheutzow]{HMS-asym}
M.~Hairer, J.~C. Mattingly, and M.~Scheutzow.
\newblock Asymptotic coupling and a general form of {H}arris' theorem with
  applications to stochastic delay equations.
\newblock \emph{Probab. Theory Related Fields}, 149\penalty0 (1-2):\penalty0
  223--259, 2011.
\newblock ISSN 0178-8051.
\newblock \doi{10.1007/s00440-009-0250-6}.

\bibitem[Harris(1956)]{Har-inv}
T.~E. Harris.
\newblock The existence of stationary measures for certain {M}arkov processes.
\newblock In \emph{Proceedings of the {T}hird {B}erkeley {S}ymposium on
  {M}athematical {S}tatistics and {P}robability, 1954--1955, vol. {II}}, pages
  113--124. University of California Press, Berkeley and Los Angeles, 1956.

\bibitem[Kallenberg(2002)]{Kal}
O.~Kallenberg.
\newblock \emph{Foundations of modern probability}.
\newblock Probability and its Applications (New York). Springer-Verlag, New
  York, second edition, 2002.
\newblock ISBN 0-387-95313-2.
\newblock \doi{10.1007/978-1-4757-4015-8}.

\bibitem[Kyprianou(2014)]{Kyp}
A.~E. Kyprianou.
\newblock \emph{Fluctuations of {L}\'evy processes with applications}.
\newblock Springer, second edition, 2014.
\newblock ISBN 978-3-642-37631-3.
\newblock \doi{10.1007/978-3-642-37632-0}.

\bibitem[Meyn and Tweedie(2009)]{MT-book}
S.~Meyn and R.~L. Tweedie.
\newblock \emph{Markov chains and stochastic stability}.
\newblock Cambridge University Press, Cambridge, second edition, 2009.
\newblock ISBN 978-0-521-73182-9.
\newblock \doi{10.1017/CBO9780511626630}.

\bibitem[Meyn and Tweedie(1992)]{MT92}
S.~P. Meyn and R.~L. Tweedie.
\newblock Stability of {M}arkovian processes. {I}. {C}riteria for discrete-time
  chains.
\newblock \emph{Adv. in Appl. Probab.}, 24\penalty0 (3):\penalty0 542--574,
  1992.
\newblock ISSN 0001-8678.
\newblock \doi{10.2307/1427479}.

\bibitem[Meyn and Tweedie(1993{\natexlab{a}})]{MT93a}
S.~P. Meyn and R.~L. Tweedie.
\newblock Stability of {M}arkovian processes. {II}. {C}ontinuous-time processes
  and sampled chains.
\newblock \emph{Adv. in Appl. Probab.}, 25\penalty0 (3):\penalty0 487--517,
  1993{\natexlab{a}}.
\newblock ISSN 0001-8678.
\newblock \doi{10.2307/1427521}.

\bibitem[Meyn and Tweedie(1993{\natexlab{b}})]{MT93b}
S.~P. Meyn and R.~L. Tweedie.
\newblock Stability of {M}arkovian processes. {III}. {F}oster-{L}yapunov
  criteria for continuous-time processes.
\newblock \emph{Adv. in Appl. Probab.}, 25\penalty0 (3):\penalty0 518--548,
  1993{\natexlab{b}}.
\newblock ISSN 0001-8678.
\newblock \doi{10.2307/1427522}.

\bibitem[Mischler and Scher(2016)]{MS16}
S.~Mischler and J.~Scher.
\newblock Spectral analysis of semigroups and growth-fragmentation equations.
\newblock \emph{Ann. Inst. H. Poincar\'e Anal. Non Lin\'eaire}, 33\penalty0
  (3):\penalty0 849--898, 2016.
\newblock ISSN 0294-1449.
\newblock \doi{10.1016/j.anihpc.2015.01.007}.

\bibitem[Nummelin(1984)]{Num-mc}
E.~Nummelin.
\newblock \emph{General irreducible {M}arkov chains and nonnegative operators},
  volume~83 of \emph{Cambridge Tracts in Mathematics}.
\newblock Cambridge University Press, Cambridge, 1984.
\newblock ISBN 0-521-25005-6.
\newblock \doi{10.1017/CBO9780511526237}.

\bibitem[Pardo et~al.(2015)Pardo, P\'erez, and Rivero]{PardoPR}
J.~Pardo, J.~P\'erez, and V.~Rivero.
\newblock The excursion measure away from zero for spectrally negative l\'evy
  processes.
\newblock Preprint, \arxivref{1507.05225v1}{math.PR}, 2015.

\bibitem[Perthame(2007)]{Per07}
B.~Perthame.
\newblock \emph{Transport equations in biology}.
\newblock Frontiers in Mathematics. Birkh\"auser Verlag, Basel, 2007.
\newblock ISBN 978-3-7643-7841-7; 3-7643-7841-7.

\bibitem[Protter(2005)]{Pro-sc}
P.~E. Protter.
\newblock \emph{Stochastic integration and differential equations}, volume~21
  of \emph{Stochastic Modelling and Applied Probability}.
\newblock Springer-Verlag, Berlin, 2005.
\newblock ISBN 3-540-00313-4.
\newblock \doi{10.1007/978-3-662-10061-5}.
\newblock Second edition. Version 2.1, Corrected third printing.

\bibitem[Rogers and Williams(1994)]{RW1}
L.~C.~G. Rogers and D.~Williams.
\newblock \emph{Diffusions, {M}arkov processes, and martingales. {V}ol. 1}.
\newblock Wiley Series in Probability and Mathematical Statistics: Probability
  and Mathematical Statistics. John Wiley \& Sons, Ltd., Chichester, second
  edition, 1994.
\newblock ISBN 0-471-95061-0.

\bibitem[Sato(1999)]{Sato}
K.~Sato.
\newblock \emph{L\'evy processes and infinitely divisible distributions},
  volume~68 of \emph{Cambridge Studies in Advanced Mathematics}.
\newblock Cambridge University Press, Cambridge, 1999.
\newblock ISBN 0-521-55302-4.

\bibitem[Seneta(2006)]{Seneta}
E.~Seneta.
\newblock \emph{Non-negative matrices and {M}arkov chains}.
\newblock Springer Series in Statistics. Springer, New York, 2006.
\newblock ISBN 978-0387-29765-1; 0-387-29765-0.

\bibitem[Sharpe(1988)]{Sha-mp}
M.~Sharpe.
\newblock \emph{General theory of {M}arkov processes}, volume 133 of \emph{Pure
  and Applied Mathematics}.
\newblock Academic Press, Inc., Boston, MA, 1988.
\newblock ISBN 0-12-639060-6.

\bibitem[Shi(2015)]{Shi}
Z.~Shi.
\newblock \emph{Branching random walks}, volume 2151 of \emph{Lecture Notes in
  Mathematics}.
\newblock Springer, Cham, 2015.
\newblock ISBN 978-3-319-25371-8; 978-3-319-25372-5.
\newblock \doi{10.1007/978-3-319-25372-5}.

\end{thebibliography}

\end{document}